\theoremstyle{plain}
\newtheorem{theorem}{Theorem}[section]
\newtheorem{lemma}[theorem]{Lemma}
\newtheorem{proposition}[theorem]{Proposition}
\theoremstyle{definition}
\newtheorem{definition}[theorem]{Definition}
\newtheorem{conjecture}[theorem]{Conjecture}
\begin{document}

\title{Growth for the central polynomials}

\maketitle

\centerline{Amitai Regev}
\medskip
\centerline{Dept. Math and CS}
\centerline{The Weizmann Institute,}
\centerline{Rehovot 76100, Israel}
\centerline{email: amitai.regev at weizmann.ac.il}

remark6.tex

\bigskip
{\bf Abstact}:
We study the growth of the central polynomials for the algebras $G$ and $M_k(F)$, the
infinite dimensional Grassmann algebra and the $k\times k$ matrices
over a field $F$ of characteristic zero.  In particular it follows that
$M_k(F)$ satisfy many proper central polynomials.

\bigskip

{\bf Key words:} PI algebras, central polynomials, Young tableaux.

\medskip
\section{Introduction and main result}
Let $F$ be a field of characteristic zero, $A$  an $F$ algebra, $Id(A)\subseteq F\langle x \rangle$ the polynomial identities of $A$, and
let $Id^z(A)\subseteq F\langle x \rangle$ denote the  polynomials which are central on $A$:
$g(x_1,\ldots,x_n)\in Id^z(A)$ if for any $a_1,\ldots ,a_n\in A,$ $g(a_1,\ldots , a_n)\in center(A)$.
Such $g$ is {\it proper central} if $g$ is central and non-identity of $A$.
For example let $A=M_2(F)$, then $[x,y]^2\not \in Id(A)$ but $[x,y]^2 \in Id^z(A)$.
Note that polynomial identities
are considered here as central polynomials, and in particular $Id(A)\subseteq Id^z(A)$.
Clearly, if $g\in Id^z(A)$ then $g+Id(A)\subseteq Id^z(A)$. Thus the proper central polynomials of $A$
correspond to the space $Id^z(A)/Id(A)$.

The existence of proper central polynomials for matrix algebras was an important open problem in PI theory. It was
solved independently by Formanek~\cite{formanek.central} and by
Razmyslov~\cite{raz}, who constructed proper central polynomials for any matrix algebra $M_k(F)$.  However,
only very little is known about the question "how many proper central polynomials there are?"
We show here that this problem is related to
the growth of the central cocharacters and codimensions of the given algebra.

\medskip

In this paper we study that question in the cases $A=G$ and $A=M_k(F)$; here $G$ is the infinite dimensional Grassmann algebra. To study that "central growth" for a given algebra $A$ we intersect these spaces
with the multilinear polynomials $V_n=V_n(x_1,\ldots,x_n)$, then the proper multilinear central polynomials
correspond to the quotient space $D_n(A)=(V_n\cap Id^z(A))/(V_n\cap Id(A))$, with $\dim D_n(A)=\delta_n(A)$.
The sequence $\delta_n(A)$ determines the growth of the proper central polynomials.
The $S_n$ character of $D_n(A)$
is the difference of cocharacters (with corresponding codimensions)
$$
\chi_{S_n}(D_n(A))=\chi_n(A)-\chi_n^z(A)\qquad\mbox{hence}\qquad \delta_n(A)=c_n(A)-c_n^z(A).
$$
 In the first part of the paper we analyze  the case $A=G$ and determine, {\it precisely},
 these cocharacters, thus we determine precisely $\chi_{S_n}(D_n(A))$.
In the second part we study the case $A=M_k(F)$ and  prove that
$$
\lim_{n\to\infty} (\delta_n(M_k(F)))^{1/n}=k^2.
$$
This indicate that $M_k(F)$ satisfies a large (exponential) amount of proper central polynomials.

\section{Some generalities}
\subsection{Ordinary representations of $S_n$}

The representation theory of $S_n$ is a basic tool in what follows. Here is a brief review of that theory.
The irreducible characters of $S_n$ are indexed by the partitions $\lambda\vdash n$, denoted $\chi^\lambda$,
 with $f^\lambda=\deg \chi^\lambda$ the corresponding degree. To $\lambda\vdash n$ corresponds the
 (unique) Young diagram $D_\lambda$ (of shape $\lambda$) and (many) Young tableaux $T_\lambda$, again
 of shape $\lambda$. To such  tableaux $T_\lambda$ corresponds the semi-idempotent $e_{T_\lambda}$ in the
 group algebra $FS_n$; the left ideal $FS_ne_{T_\lambda}$ is an irreducible $S_n$-representation with
 corresponding $S_n$ character $\chi^\lambda$: $\chi_{S_n}(FS_ne_{T_\lambda})=\chi^\lambda$.
For a detailed account of that theory see for example~\cite{sagan}.

\subsection{Proper central polynomials}
Recall that the elements of $Id^z(A)/ Id(A)$ correspond to the {\it proper central polynomials of $A$}.
We would like to  estimate  the amount of the proper central polynomials of $A$, and we do that by
restricting to multilinear polynomials (see Definition~\ref{new1} below), since as in the case of identities,
the multilinear central polynomials generate all such polynomials.

\medskip
It is well known that $Id(A)$ is a $T$-ideal
in $F\langle x \rangle$, namely it is closed under substitutions and under ideal operations in
 $F\langle x \rangle$. On the other hand $Id^z(A)$ is only a $T$-subalgebra of  $F\langle x \rangle$,
  it is closed under substitutions and under algebra -- but not ideal -- operations in
 $F\langle x \rangle$.
Since $Id^z(A)$ and $Id(A)$ are closed under substitutions,
 the following definition makes sense as it is
independent of the particular variables $x_1,\ldots,x_n$.
\begin{definition}\label{new1}

Let
$V_n=V_n(x_1,\ldots,x_n)$ be the multilinear polynomials
of degree $n$ in $x_1,\ldots,x_n$. To study   $Id^z(A)/ Id(A)$
we intersect with $V_n$ since, as in the case of identities, the central polynomials are generated be the
multilinear central polynomials. We define

\medskip
1.~
$$
D_n(A):=\frac{V_n\cap Id^z(A)}{ V_n\cap Id(A)}\quad\mbox{and}\quad \delta_n(A):=\dim(D_n(A))=
\dim \left(\frac{(V_n\cap Id^z(A))}{ (V_n\cap Id(A))}\right)
$$
Similarly

\medskip
2.~~
$$
 c_n(A):=\dim \left(\frac{V_n}{V_n\cap Id(A)}\right) \qquad \chi_n(A):= \chi_{S_n}\left(\frac{V_n}{V_n\cap Id(A)}\right),
$$
$$
 c^z_n(A):=\dim \left(\frac{V_n}{V_n\cap Id^z(A)}\right) \qquad\mbox{and}\qquad \chi_n^z(A):= \chi_{S_n}\left(\frac{V_n}{V_n\cap Id^z(A)}\right),
$$

the ordinary and the central cocharacters with their corresponding codimensions.

\end{definition}
\begin{lemma}\label{new2}
$D_n(A)$ is a left $S_n$ module, its $S_n$ character is
\begin{eqnarray}\label{difference2}
\chi_{S_n}(D_n(A))=\chi_n(A)-\chi^z_n(A)
\end{eqnarray}
and by taking degrees we get
\begin{eqnarray}\label{difference3}
\delta_n(A)=c_n(A)-c^z_n(A).
\end{eqnarray}
\end{lemma}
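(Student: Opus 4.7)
The plan is to exhibit $D_n(A)$ as the kernel of a natural $S_n$-equivariant surjection, and then read off the character identity from additivity on short exact sequences.

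First I would check that the three spaces $V_n$, $V_n\cap Id(A)$ and $V_n\cap Id^z(A)$ are all left $S_n$-submodules of $V_n$ under the action permuting the variables $x_1,\dots,x_n$. For $V_n$ this is clear. For $V_n\cap Id(A)$ this is standard: $Id(A)$ is closed under substitutions, so applying a permutation $\sigma\in S_n$ (which is just the substitution $x_i\mapsto x_{\sigma(i)}$) preserves the property of being an identity, and clearly preserves multilinearity of degree $n$. The analogous statement for $V_n\cap Id^z(A)$ is the same remark applied to the $T$-subalgebra $Id^z(A)$, which is also noted in the excerpt to be closed under substitutions. Consequently the quotient $D_n(A)=(V_n\cap Id^z(A))/(V_n\cap Id(A))$ inherits a well-defined left $S_n$-action (note $V_n\cap Id(A)\subseteq V_n\cap Id^z(A)$ since $Id(A)\subseteq Id^z(A)$).

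Next I would write down the short exact sequence of $S_n$-modules
\begin{equation*}
0\longrightarrow \frac{V_n\cap Id^z(A)}{V_n\cap Id(A)}\longrightarrow \frac{V_n}{V_n\cap Id(A)}\longrightarrow \frac{V_n}{V_n\cap Id^z(A)}\longrightarrow 0,
\end{equation*}
where the first map is induced by the inclusion $V_n\cap Id^z(A)\hookrightarrow V_n$ and the second by the further projection; exactness follows from the third isomorphism theorem. Since characters are additive on short exact sequences of finite-dimensional $S_n$-modules, this yields
\begin{equation*}
\chi_{S_n}\!\left(\frac{V_n}{V_n\cap Id(A)}\right) = \chi_{S_n}(D_n(A)) + \chi_{S_n}\!\left(\frac{V_n}{V_n\cap Id^z(A)}\right),
\end{equation*}
which by Definition~\ref{new1} is precisely $\chi_n(A)=\chi_{S_n}(D_n(A))+\chi_n^z(A)$, giving~(\ref{difference2}). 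Taking dimensions (i.e., evaluating the characters at the identity of $S_n$) immediately yields~(\ref{difference3}).

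There is essentially no obstacle here: the only small point to verify carefully is that the $S_n$-action on $V_n$ really does preserve $Id^z(A)$, but this is immediate from the fact that permuting variables is a particular substitution and $Id^z(A)$ is stable under all substitutions. Everything else is formal additivity of characters on short exact sequences.
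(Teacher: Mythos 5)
Your proof is correct and is essentially the paper's argument: the paper notes the isomorphism $D_n(A)\cong \bigl(V_n/(V_n\cap Id(A))\bigr)\big/\bigl(V_n/(V_n\cap Id^z(A))\bigr)$ of $S_n$-modules (i.e.\ the third isomorphism theorem / your short exact sequence) and then takes characters and degrees, exactly as you do. Your version merely spells out the $S_n$-stability of the intersections and the additivity of characters, which the paper leaves implicit.
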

\begin{proof}
Note that
$$
D_n(A)=\frac{V_n\cap Id^z(A)}{ V_n\cap Id(A)}\cong \frac{V_n/(V_n\cap Id(A))}{V_n/(V_n\cap Id^z(A))},
$$
isomorphism of quotient $S_n$ modules.  The proof follows by computing the corresponding $S_n$ characters
-- and their degrees.
\end{proof}

\section{The case $A=G$}

We now calculate the above invariants $c_n^z(G),$ $\chi_n^z(G)$ and $\delta_n(G)$ for $A=G$, the infinite dimensional Grassmann algebra.

\subsection{The results for $G$}
Denote $\chi_n^z(G)=\sum_{\lambda\vdash n}a_\lambda\chi^\lambda\quad\mbox{and}\quad
 \chi_n(G)=\sum_{\lambda\vdash n}b_\lambda\chi^\lambda.$ Since $\chi_n^z(G)\le \chi_n(G)$,
 all $a_\lambda\le b_\lambda.$
By~\cite{olsson} $ b_\lambda=1$ if $\lambda$ is a $(1,1)$ hook partition $\lambda=(n-j,1^j),$ and
$b_\lambda=0$ otherwise. It follows that $a_\lambda\in\{0,1\}$ if
$\lambda$ is a $(1,1)$ hook partition $\lambda=(n-j,1^j)$, and $a_\lambda=0$ otherwise. We prove here the following theorem.
\begin{theorem}\label{main}
Let $\chi_n^z(G)=\sum_{\lambda\vdash n}a_\lambda\chi^\lambda$, then $ a_{(n-2j,1^{2j})}=1$
for $0\le j\le\lfloor n/2\rfloor$, and $a_\lambda=0$ for all other partitions
$\lambda\vdash n$.
\end{theorem}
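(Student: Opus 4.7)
The plan is as follows. The excerpt already notes $a_\lambda \in \{0, 1\}$ for every hook $\lambda = (n-m, 1^m)$ and $a_\lambda = 0$ on non-hooks, so the theorem amounts to showing $a_\lambda = 1$ precisely when $m$ is even. For each hook, the $\lambda$-isotypic component of $V_n/(V_n \cap Id(G))$ is a single copy of the irreducible $S_n$-module, and its subspace of central elements (modulo identities) is $S_n$-invariant --- hence by irreducibility is either $0$ (giving $a_\lambda = 1$) or all of it (giving $a_\lambda = 0$). So $a_\lambda$ is decided by testing a single nonzero representative. A convenient choice is $p_{n,m} := e_T(x_1 x_2 \cdots x_n)$ with $T$ the standard tableau of shape $(n-m, 1^m)$ having first column $\{1, \ldots, m+1\}$; explicitly,
\[
p_{n,m} = \sum_{r \in R_T} \mathrm{St}_{m+1}(x_{r(1)}, x_2, \ldots, x_{m+1}) \cdot x_{r(m+2)} \cdots x_{r(n)},
\]
where $\mathrm{St}_d(y_1, \ldots, y_d) = \sum_{\sigma \in S_d} \mathrm{sgn}(\sigma) y_{\sigma(1)} \cdots y_{\sigma(d)}$ is the standard polynomial and $R_T = S_{\{1, m+2, \ldots, n\}}$ is the row stabilizer.

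The computational centerpiece is a Laplace-type identity for $\mathrm{St}_d$ on $G$: if some argument $y_l$ is replaced by a central (even-graded) element, then grouping the terms of $\mathrm{St}_d$ by the position of $y_l$ and moving $y_l$ to the front yields a sum proportional to $\sum_{k=1}^d (-1)^k$, which vanishes for $d$ even. Thus $\mathrm{St}_d$ kills any substitution with a central argument whenever $d$ is even.

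For the non-central case ($m$ even), substitute $x_i = e_i$ for $i \le m+1$ (distinct Grassmann generators) and $x_i = 1$ otherwise, so $|\epsilon| = m+1$ is odd. Splitting over whether $r$ fixes $1$ and applying the Laplace identity (with $d = m+1$ odd) when $r(1) \ne 1$, a short calculation gives
\[
p_{n,m}(e_1, \ldots, e_{m+1}, 1, \ldots, 1) = n \cdot (n-m-1)! \cdot m! \cdot e_1 e_2 \cdots e_{m+1} \;\in\; G_{m+1} \subseteq G_{\mathrm{odd}},
\]
which is not central; hence $a_{(n-m, 1^m)} = 1$ for $m$ even, i.e., $a_{(n-2j, 1^{2j})} = 1$.

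The harder half is centrality for $m$ odd. Using the $\mathbb{Z}_2$-grading and supercommutativity of $G$, it suffices to check $p_{n,m}(a) = 0$ for every homogeneous substitution $a_i \in G_{\epsilon_i}$ with $|\epsilon|$ odd. Since $m+1$ is even, the Laplace identity forces $\mathrm{St}_{m+1}(a_{r(1)}, a_2, \ldots, a_{m+1}) = 0$ unless $\epsilon_{r(1)} = \epsilon_2 = \cdots = \epsilon_{m+1} = 1$; setting $A = \{i : \epsilon_i = 1\}$ and $B = \{1, m+2, \ldots, n\}$, the surviving $r$ satisfy $r(1) \in A \cap B$, and $k := |A \cap B| = |\epsilon| - m$ is forced to be even. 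The cases $k = 0$ and $k \ge 4$ vanish by a direct sign argument: after pulling out the central $a_c$'s, the inner sum over the remaining $r$ reduces to a multiple of $\sum_{\tau \in S_{k-1}} \mathrm{sgn}(\tau)$, which is $0$ once $k - 1 \ge 2$. The delicate case $k = 2$ requires an extra supercommutativity step: writing $A \cap B = \{b_1, b_2\}$ and $P = a_2 \cdots a_{m+1}$ (of grading $m$, odd), one has $a_{b_i} P = -P a_{b_i}$, so the two terms from $r(1) = b_1$ and $r(1) = b_2$ reduce to $\pm P a_{b_1} a_{b_2}$ and cancel using $a_{b_1}a_{b_2} + a_{b_2}a_{b_1} = 0$. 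The main obstacle is precisely this $k = 2$ case, where the naive sign-cancellation fails and one must invoke the anticommutativity of the remaining pair of odd substitutions explicitly.
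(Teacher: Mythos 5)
Your proposal is correct in substance, but it follows a genuinely different route from the paper. The paper never evaluates hook semi-idempotents on Grassmann elements: it first shows (via Lemma~\ref{cancellation}, Lemma~\ref{cancellation2} and Theorem~\ref{corollary1}) that $h_{n-1}\mapsto x_nh_{n-1}$ induces an $S_{n-1}$-isomorphism $V_{n-1}/(V_{n-1}\cap Id(G))\cong V_n/(V_n\cap Id^z(G))$, hence $\chi_n^z(G)\downarrow_{S_{n-1}}=\chi_{n-1}(G)$; then, since $\chi_{n-1}(G)=\sum_j\chi^{(n-1-j,1^j)}$ is known, the branching rule for hooks together with the single anchor $a_0=1$ (the full symmetrizer is visibly non-central on $G$) produces the system $a_0=1$, $a_0+a_1=1$, $a_1+a_2=1,\dots$, which forces $a_{2j}=1$, $a_{2j+1}=0$ (Theorem~\ref{1.9}). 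You instead settle each hook separately: the multiplicity-one fact from \cite{olsson} plus irreducibility reduces $a_\lambda$ to testing one representative, and you evaluate $e_T(x_1\cdots x_n)$ explicitly using the parity behaviour of $\mathrm{St}_d$ (vanishing when $d$ is even and an argument is central) and supercommutativity. Your computations check out: the value $n\,(n-m-1)!\,m!\,e_1\cdots e_{m+1}$ for even $m$, the reduction to even $k=|A\cap B|$, the sign-cancellation for $k\ge 4$, and the $k=2$ cancellation via $a_{b_1}Pa_{b_2}+a_{b_2}Pa_{b_1}=0$ are all correct. The paper's route is shorter and yields the extra information $c_n^z(G)=c_{n-1}(G)=2^{n-2}$ for free; yours buys explicit polynomials — a concrete non-central element of each even-hook component and a concrete proper central polynomial in each odd-hook component.

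There is one missing step in your odd case. To conclude $a_{(n-m,1^m)}=0$ you argue that a nonzero element of the single $\lambda$-isotypic copy in $V_n/(V_n\cap Id(G))$ lies in $D_n(G)$; for this, centrality of $p_{n,m}$ alone is not enough — you must also verify $p_{n,m}\notin Id(G)$, since an identity maps to zero in that quotient and decides nothing about $a_\lambda$. Fortunately this is immediate with your own tools: substitute $x_i=e_i$ for $1\le i\le m+1$ and $x_i=1$ otherwise; every row permutation with $r(1)\ne 1$ contributes $0$ because $m+1$ is even and the entry $1$ is central, while the $(n-m-1)!$ permutations with $r(1)=1$ give $(n-m-1)!\,(m+1)!\,e_1\cdots e_{m+1}\ne 0$. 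With that one line added, your proof is complete.
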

See  also Theorem~\ref{1.9} below.
\medskip

It is known~\cite{krakowsky} that $Id(G)=T([[x,y],u])$, the $T$-ideal of the triple commutator.
An obvious central polynomial for $G$ is the commutator
$[x,y]\in Id^z(G)$.
 Theorem~\ref{corollary2} below is of interest on its own; it says that the algebra of central polynomials of $G$
is $T$ generated by the commutator $[x,y]$ and by the triple commmutator $[[x,y],u]$.

\medskip

We remind here that the central polynomials form a $T$-subalgebra, but not an ideal,
hence they behave rather differently from identities. For example,
modulo $Id^z(G)$ the monomials $x_1x_2$ and $x_2x_1$ are linearly dependent
(since  $x_1x_2-x_2x_1\in Id^z(G)$) but
$x_3x_1x_2$ and $x_3x_2x_1$ are linearly independent modulo $Id^z(G)$, see Lemma~\ref{cancellation2} below.

\subsection{Some preparations}
Our aim here is to calculate the $S_n$ character $\chi^z_n(G)$, the $n$-th central cocharacter of $G$.
We first calculate the restriction $\chi^z_n(G)\downarrow_{S_{n-1}}$, proving it in fact is the ordinary
$n-1$ cocharacter of $G$, see Theorem~\ref{corollary1}.
 A main tool here is the following isomorphism.

\begin{proposition}
There is a canonical isomorphism of $FS_{n-1}$ modules
$$
\frac{V_n}{V_n\cap Id^z(G)}\cong \frac{V_{n-1}}{V_{n-1}\cap Id(G)}.
$$
It follows that the cocharacters satisfy
$$
\chi_n^z(G)\downarrow_{S_{n-1}}=\chi_{n-1}(G),
$$
{and in particular the codimensions satisfy
$c_n^z(G)=c_{n-1}(G).$}
Since $c_n(G)=2^{n-1}$~\cite{krakowsky}, hence $$c_n^z(G)=2^{n-2}=\frac{1}{2}c_n(G).$$
\end{proposition}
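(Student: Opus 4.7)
The plan is to exhibit the canonical map as right multiplication by $x_n$. Let $\phi\colon V_{n-1}\to V_n$ be defined by $\phi(f)=f\cdot x_n$; this is an $S_{n-1}$-equivariant injection, where $S_{n-1}$ permutes $x_1,\dots,x_{n-1}$ and fixes $x_n$. Since $Id(G)$ is a $T$-ideal, $\phi$ sends $V_{n-1}\cap Id(G)$ into $V_n\cap Id(G)\subseteq V_n\cap Id^z(G)$, so it descends to a well-defined $S_{n-1}$-map
$$
\bar\phi\colon\frac{V_{n-1}}{V_{n-1}\cap Id(G)}\longrightarrow\frac{V_n}{V_n\cap Id^z(G)}.
$$
It remains to show that $\bar\phi$ is both injective and surjective.

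For injectivity I would show that $f\cdot x_n\in Id^z(G)$ forces $f\in Id(G)$. Substituting $a_n=1$ in $f\cdot x_n$ yields $f\in Id^z(G)$, so for every tuple $a_1,\dots,a_{n-1}\in G$ the value $z:=f(a_1,\dots,a_{n-1})$ lies in the center $Z(G)=G_+$ (the even part of the Grassmann algebra). Choosing $a_n\in G_-$ in the hypothesis now forces $z\cdot a_n\in Z(G)=G_+$, while the parity gives $z\cdot a_n\in G_+\cdot G_-=G_-$; hence $z\cdot a_n=0$. So $z\in G_+$ right-annihilates the entire odd part of $G$. Because $G$ is generated by infinitely many anticommuting elements, no nonzero even element can annihilate every odd generator, whence $z=0$ and $f\in Id(G)$.

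For surjectivity the key observation is that although $Id^z(G)$ is not a $T$-ideal, it is closed under substitutions; applied to the central polynomial $[x,y]\in Id^z(G)$ this yields $[P,Q]\in Id^z(G)$ for all polynomials $P,Q$, so any two subproducts commute modulo $Id^z(G)$. Given any multilinear monomial $m=x_{\sigma(1)}\cdots x_{\sigma(n)}\in V_n$, locate $x_n$ and write $m=A\cdot x_n\cdot B$ with $A,B$ monomials in $\{x_1,\dots,x_{n-1}\}$. Taking $P=A\cdot x_n$ and $Q=B$ gives
$$
m \;=\; PQ \;\equiv\; QP \;=\; (BA)\cdot x_n \;=\; \phi(BA)\pmod{Id^z(G)},
$$
so every element of $V_n$ is congruent modulo $V_n\cap Id^z(G)$ to one in the image of $\phi$, establishing surjectivity.

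Combining the two steps gives the desired canonical isomorphism of $S_{n-1}$-modules; taking $S_{n-1}$-characters yields $\chi_n^z(G)\downarrow_{S_{n-1}}=\chi_{n-1}(G)$, and taking dimensions yields $c_n^z(G)=c_{n-1}(G)$, which combined with the known value $c_n(G)=2^{n-1}$ from~\cite{krakowsky} gives $c_n^z(G)=2^{n-2}=\frac{1}{2}c_n(G)$. I expect the main obstacle to be the injectivity step: it relies on the specific structural fact that no nonzero even element of $G$ right-annihilates the entire odd part, and this is precisely where the infinite dimensionality of $G$ enters the argument.
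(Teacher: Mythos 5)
Your proof is correct and is essentially the paper's argument: the paper also realizes the isomorphism via multiplication by $x_n$ (on the left rather than the right), proves surjectivity by the same observation that commutators $[P,Q]$ lie in $Id^z(G)$ and so $x_n$ can be moved to the edge of every monomial modulo $V_n\cap Id^z(G)$, and identifies the kernel with $V_{n-1}\cap Id(G)$. The only divergence is the kernel step: where you substitute $x_n=1$ and then kill the central value $z$ with a fresh odd generator (so your argument tacitly uses that $G$ is unital), the paper evaluates on a disjoint substitution by basis monomials and flips the parity of the entry for $x_n$ --- the same $\mathbb{Z}_2$-grading idea, executed without appealing to the unit.
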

The proof is given in Section~\ref{isomorphism} below.


\begin{lemma}\label{cancellation}
Given $h=h_n\in V_n$, there exist $h_{n-1}\in V_{n-1}$ (given canonically, so the
map $h_n\to h_{n-1}$ is well defined) such that $h_n=x_nh_{n-1}+q_n$ where $q_n\in Id^z(G)\cap V_n$.
\end{lemma}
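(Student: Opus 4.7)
The plan is to reduce each monomial of $h_n$ modulo $Id^z(G)$, exploiting the fact that in the Grassmann algebra every commutator of two elements is central; indeed $[a,b]\in G_0=Z(G)$ for all $a,b\in G$. Consequently, for any two polynomials $F,G\in F\langle x\rangle$ the commutator $[F,G]$ is central on $G$ upon every substitution, so $[F,G]\in Id^z(G)$.

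Given a monomial $m = A\,x_n\,B \in V_n$, with $A$ and $B$ the (possibly empty) words in $\{x_1,\ldots,x_{n-1}\}$ to either side of the unique occurrence of $x_n$ in $m$, I would verify the algebraic identity
$$
A\,x_n\,B \;-\; x_n(BA) \;=\; [A x_n,\,B] \;+\; [BA,\,x_n].
$$
The right-hand side is a sum of two polynomial commutators, hence lies in $V_n\cap Id^z(G)$ by the observation above. Thus $m \equiv x_n(BA)\pmod{Id^z(G)}$. To make the assignment canonical, I would uniquely decompose $h_n=\sum_i c_i\,A_i\,x_n\,B_i$ (possible because $x_n$ occurs exactly once in each monomial of $V_n$) and set $h_{n-1}:=\sum_i c_i\,B_iA_i\,\in\,V_{n-1}$; summing the identity over $i$ yields $h_n - x_n h_{n-1}\in V_n\cap Id^z(G)$, as required.

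The main subtlety is the choice of the cyclic rotation $B_iA_i$ rather than the naive deletion $A_iB_i$. With the latter choice the defect $A\,x_n\,B - x_n(AB)$ equals $[A,x_n]\,B$, a central element multiplied by $B$; since $Id^z(G)$ is only a $T$-subalgebra and not an ideal of $F\langle x\rangle$ (as emphasized in the introduction), such products need not be central — already substituting Grassmann generators $e_1,e_2,e_3$ for $A,x_n,B$ gives $2e_1e_2e_3\in G_1$, which is not central. The cyclic-rotation choice is precisely what permits the defect to be rewritten as a sum of commutators of \emph{full} polynomials, each of which lies in $Id^z(G)$ automatically.
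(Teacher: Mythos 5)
Your proof is correct and takes essentially the same route as the paper: reduce to monomials, cyclically rotate so $x_n$ moves to the front, and absorb the defect into $Id^z(G)$ using the fact that every commutator of two polynomials is central on $G$. The only cosmetic difference is that the paper writes the defect as the single commutator $[A,x_nB]$ instead of your sum $[Ax_n,B]+[BA,x_n]$.
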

\begin{proof}
Suffices to prove when $h\in V_n$ is a monomial $h=M$. Then $M=ax_nb$, so $M=x_nba+[a,x_nb]$,
and $q_n=[a,x_nb]\in  Id^z(G)\cap V_n. $
\end{proof}
{\bf Note} that this presentation is not unique, since if $h'_{n-1}\in V_{n-1}\cap Id(G)$, we have
$h_n=x_n(h_{n-1}+h'_{n-1})+(q_n-x_nh'_{n-1}),$  and $q_n-x_nh'_{n-1}\in Id^z(G)\cap V_n.$

\medskip

\begin{definition}
Let $B=\{e_{i_1}\cdots e_{i_r}\mid r=1,2,\ldots,~~i_1<\cdots< i_r \}$ the canonical linear basis of $G$.
 We consider
{\it disjoint} substitutions, namely $x_j\to \bar x_j\in B$, $j=1,\ldots, n$ such that
$\bar x_1\cdots \bar x_n\ne 0$. Given $h_n\in V_n$, such a substitution induces
$h_n=h_n(x_1,\ldots,x_n)\to \bar h_n=h_n(\bar x_1,\ldots ,\bar x_n)$.
If $e_{i_1}\cdots e_{i_r}\ne 0$, denote $\ell(e_{i_1}\cdots e_{i_r})=r$ (length). Such a substitution is even (odd) if
$\ell(\bar x_1\cdots\bar x_n)=\sum _{i=1}^n\ell(\bar x_i)$
is even (odd).
\end{definition}
{\bf Note} that such a disjoint substitution is even if and only if $\bar x_1\cdots\bar x_n$ is central in $G$.

\begin{lemma}\label{cancellation2}
Let $h_{n-1}\in V_{n-1}$ then $x_nh_{n-1}\in Id^z(G)$ if and only if
$h_{n-1}\in Id(G)$.
\end{lemma}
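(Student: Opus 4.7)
The plan is: the ``if'' direction is immediate from $Id(G)$ being a $T$-ideal, while the ``only if'' direction carries all the content and requires exhibiting, for any $h_{n-1}\notin Id(G)$, an actual non-central evaluation of $x_nh_{n-1}$. The key device is the disjoint substitutions introduced just above, whose interaction with multilinear polynomials in $G$ is very rigid: on such a substitution every surviving monomial of $h_{n-1}$ collapses, up to a sign, to one and the same basis element $e_I$ of $G$.

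For the ``if'' direction, since $Id(G)$ is a $T$-ideal, $h_{n-1}\in Id(G)$ implies $x_nh_{n-1}\in Id(G)\subseteq Id^z(G)$. For the ``only if'' direction I argue contrapositively. Assume $h_{n-1}\notin Id(G)$, so there is a disjoint substitution $x_j\mapsto \bar x_j\in B$, $j=1,\ldots,n-1$, with $h_{n-1}(\bar x_1,\ldots,\bar x_{n-1})\neq 0$. Because the $\bar x_j$'s are disjoint basis monomials and $h_{n-1}$ is multilinear, each nonzero monomial in the expansion of $h_{n-1}(\bar x_1,\ldots,\bar x_{n-1})$ equals $\pm e_I$ for the same index set $I = \bigcup_{j=1}^{n-1} \operatorname{supp}(\bar x_j)$; hence $h_{n-1}(\bar x_1,\ldots,\bar x_{n-1}) = c\,e_I$ for some scalar $c\neq 0$.

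Now I exploit that $G$ has infinitely many generators $e_i$, so I may pick $\bar x_n\in B$ supported on indices disjoint from $I$. I select $\bar x_n$ by parity: if $|I|$ is even, take $\bar x_n=e_j$ for one fresh index $j$; if $|I|$ is odd, take $\bar x_n=e_je_k$ for two fresh indices $j<k$. In both cases $\bar x_n\cdot c\,e_I = \pm c\, e_{I\cup\operatorname{supp}(\bar x_n)}$ has odd length, hence does not lie in $center(G)$. Since this is precisely the evaluation of $x_n h_{n-1}$ at $(\bar x_1,\ldots,\bar x_n)$, we conclude $x_nh_{n-1}\notin Id^z(G)$, completing the contrapositive.

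The single step deserving care is the ``collapse'' claim — that a multilinear polynomial evaluated on a disjoint basis substitution yields a scalar multiple of one fixed basis monomial $e_I$; once this is observed, the parity manipulation is a one-line argument made possible by the inexhaustible supply of generators in the infinite-dimensional $G$.
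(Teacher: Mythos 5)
Your proof is correct and rests on the same mechanism as the paper's: disjoint substitutions into the Grassmann basis together with a parity argument, choosing the substitution for $x_n$ so that the value of $x_nh_{n-1}$ lands in the odd (hence non-central) part of $G$. The only difference is cosmetic — you argue the contrapositive with the explicit collapse $\bar h_{n-1}=c\,e_I$, while the paper fixes a disjoint substitution, flips the parity of $\bar x_n$, and concludes $\bar h_{n-1}=0$ — so the two arguments are essentially identical.
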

\begin{proof}
1. ~Assume $h_{n-1}\in Id(G)$ then $x_nh_{n-1}\in Id(G)\subseteq Id^z(G).$ Conversely,

\medskip
2. ~Let $x_nh_{n-1}\in Id^z(G).$ Make a disjoint substitution $x_j\to \bar x_j$, $j=1,\ldots,n$,
then $\bar x_n\bar h_{n-1}$ is central, hence even. Now slightly change that substitution: leave
$x_j\to \bar x_j$, $j=1,\ldots,n-1$ unchanged, and let $x_n\to \hat x_n $ with the parity of $\hat x_n $
opposite that of $\bar x_n .$ Since $x_nh_{n-1}\in Id^z(G),$ the result is still even, hence we
must have $\bar h_{n-1}=0$, namely $h_{n-1}\in Id(G)$.
\end{proof}

\subsubsection{An isomorphism}\label{isomorphism}
\begin{lemma}

Let $$\varphi:V_{n-1}\longrightarrow \frac{V_n}{V_n\cap Id^z(G)}$$
be given as follows: Given $h_{n-1}\in V_{n-1}$, then
$\varphi (h_{n-1})=x_nh_{n-1} + (V_n\cap Id^z(G)).$
Then

\medskip

1.  $\varphi$ is onto  ${V_n}/({V_n\cap Id^z(G)})$, and

\medskip
2.  $\ker\varphi = V_{n-1}\cap Id(G)$.
\end{lemma}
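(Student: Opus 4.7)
The plan is to observe that this lemma is essentially a packaging of the two preceding lemmas, so the proof should be two short paragraphs: one for surjectivity (using Lemma~\ref{cancellation}) and one for the kernel computation (using Lemma~\ref{cancellation2}).

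For surjectivity, I would take an arbitrary element of the codomain, represented by some $h_n\in V_n$. By Lemma~\ref{cancellation}, there exist $h_{n-1}\in V_{n-1}$ and $q_n\in V_n\cap Id^z(G)$ with $h_n=x_nh_{n-1}+q_n$. Passing to the quotient, $h_n+(V_n\cap Id^z(G))=x_nh_{n-1}+(V_n\cap Id^z(G))=\varphi(h_{n-1})$, so $\varphi$ is onto. Nothing delicate here; this is just a rewording of Lemma~\ref{cancellation}.

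For the kernel, the computation is essentially an iff: $h_{n-1}\in\ker\varphi$ means $x_nh_{n-1}\in V_n\cap Id^z(G)$, and Lemma~\ref{cancellation2} tells us that $x_nh_{n-1}\in Id^z(G)$ iff $h_{n-1}\in Id(G)$. Since $h_{n-1}\in V_{n-1}$ already, this is the same as $h_{n-1}\in V_{n-1}\cap Id(G)$. So $\ker\varphi=V_{n-1}\cap Id(G)$, as claimed.

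There is no real obstacle: all the work is done by the two preceding lemmas. The only thing worth being careful about is that in the second step, one uses the "if and only if" form of Lemma~\ref{cancellation2}, so both inclusions in $\ker\varphi=V_{n-1}\cap Id(G)$ are immediate. As a remark, once this lemma is in place, the isomorphism stated in the preceding Proposition follows since $\varphi$ clearly commutes with the $S_{n-1}$-action on the first $n-1$ variables, giving the desired isomorphism $V_n/(V_n\cap Id^z(G))\cong V_{n-1}/(V_{n-1}\cap Id(G))$ of $FS_{n-1}$-modules.
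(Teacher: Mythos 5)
Your proof is correct and follows exactly the paper's own argument: surjectivity via the decomposition $h_n=x_nh_{n-1}+q_n$ from Lemma~\ref{cancellation}, and the kernel computation via the equivalence in Lemma~\ref{cancellation2}. Nothing further is needed.
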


\begin{proof}
1.  Let $y_n\in V_n$ then by Lemma~\ref{cancellation}
we can write $y_n=x_ny_{n-1}+q_n$, ~$q_n\in Id^z(G)\cap V_n.$
Thus $\varphi( y_{n-1})= x_ny_{n-1}+(V_n\cap Id^z(G))= x_ny_{n-1}+q_n+(V_n\cap Id^z(G))=y_n+(V_n\cap Id^z(G)).$

\bigskip

2. Let $h_{n-1}\in V_{n-1}$ then $h_{n-1}\in ker \varphi$ if and only if

\medskip $x_n h_{n-1}\in V_n\cap Id^z(G)$, so  if and only if

\medskip
$x_n h_{n-1}\in Id^z(G),$ by Lemma~\ref{cancellation2} if and only if

\medskip
$ h_{n-1}\in Id(G)$ if and only if

\medskip
$ h_{n-1}\in V_{n-1}\cap Id(G).$

\end{proof}

As a corollary we have

\begin{theorem}\label{corollary1}
$\varphi$ induces the isomorphism
$$
\bar\varphi: \frac{V_{n-1}}{V_{n-1}\cap Id(G)}\cong\frac{V_{n}}{V_{n}\cap Id^z(G)}
$$
an isomorphism of $FS_{n-1}$ modules.
Therefore
\begin{eqnarray}\label{ab2}
\chi_{n-1}(G)=\chi_n^z(G)\downarrow_{S_{n-1}}.
\end{eqnarray}
Recall~\cite{krakowsky} that $c_n(G)=2^{n-1}$. Thus in particular, the codimensions satisfy
$$
c_n^z(G)=c_{n-1}(G)=2^{n-2}=\frac{1}{2}2^{n-1}=\frac{1}{2}c_n(G).
$$
\end{theorem}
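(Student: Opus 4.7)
The plan is to combine the two conclusions of the preceding lemma via the first isomorphism theorem and then verify that the resulting bijection respects the $S_{n-1}$-action.

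First I would observe that the lemma just proved gives $\varphi$ surjective with kernel $V_{n-1}\cap Id(G)$. Hence by the first isomorphism theorem (for vector spaces) one immediately obtains an induced $F$-linear isomorphism
$$
\bar\varphi:\ \frac{V_{n-1}}{V_{n-1}\cap Id(G)}\ \longrightarrow\ \frac{V_n}{V_n\cap Id^z(G)},\qquad h_{n-1}+(V_{n-1}\cap Id(G))\ \longmapsto\ x_nh_{n-1}+(V_n\cap Id^z(G)).
$$
This handles the underlying vector-space statement, and no further computation is required for it.

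Next I would upgrade this to an isomorphism of $FS_{n-1}$-modules. The group $S_{n-1}$ acts on $V_{n-1}$ by permuting $x_1,\dots,x_{n-1}$ and acts on $V_n$ by the natural embedding $S_{n-1}\hookrightarrow S_n$ which fixes $x_n$. Since both $Id(G)$ and $Id^z(G)$ are closed under permutation of variables, both spaces in the isomorphism are $S_{n-1}$-modules. Because $\sigma\in S_{n-1}$ fixes $x_n$, we have
$$
\varphi(\sigma\cdot h_{n-1})=x_n(\sigma\cdot h_{n-1})+(V_n\cap Id^z(G))=\sigma\cdot\bigl(x_nh_{n-1}\bigr)+(V_n\cap Id^z(G))=\sigma\cdot\varphi(h_{n-1}),
$$
so $\varphi$, and hence $\bar\varphi$, is $S_{n-1}$-equivariant. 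This is the only slightly subtle step, and it is in fact routine given the hypothesis that $\sigma$ does not touch $x_n$.

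Finally I would read off the character and codimension consequences. The $S_{n-1}$ character of $V_{n-1}/(V_{n-1}\cap Id(G))$ is by definition $\chi_{n-1}(G)$, while that of $V_n/(V_n\cap Id^z(G))$ regarded as an $S_{n-1}$-module (via the embedding above) is $\chi_n^z(G)\!\downarrow_{S_{n-1}}$. The isomorphism $\bar\varphi$ therefore yields (\ref{ab2}). Taking dimensions gives $c_n^z(G)=c_{n-1}(G)$, and the numerical conclusion $c_n^z(G)=2^{n-2}=\tfrac12 c_n(G)$ follows from the classical formula $c_n(G)=2^{n-1}$ of \cite{krakowsky}. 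I do not anticipate any real obstacle here; the entire content sits in the preceding lemma, and the only thing to verify in the present theorem is the $S_{n-1}$-equivariance of $\varphi$, which is immediate from the fact that $x_n$ is fixed by $S_{n-1}$.
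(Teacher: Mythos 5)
Your proposal is correct and follows essentially the same route the paper intends: the theorem is stated as an immediate corollary of the preceding lemma (surjectivity of $\varphi$ plus $\ker\varphi=V_{n-1}\cap Id(G)$), and you simply make explicit the first isomorphism theorem together with the $S_{n-1}$-equivariance of $\varphi$, which holds because $\sigma\in S_{n-1}$ fixes $x_n$ and both $Id(G)$ and $Id^z(G)$ are stable under permutations of the variables. The character restriction and the codimension computation $c_n^z(G)=c_{n-1}(G)=2^{n-2}=\frac{1}{2}c_n(G)$ then follow exactly as you state.
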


\subsubsection{The central cocharacter $\chi^z_n(G)$}

We can now prove our main result of this section.
\begin{theorem}\label{1.9}
$$
\chi^z_n(G)=\sum_{j=0}^{\lfloor n/2\rfloor} \chi^{(n-2j,1^{2j})}
=\chi^{(n)}+\chi^{(n-2,1^2)}+\chi^{(n-4,1^4)}+\cdots
$$
then by Equation~\eqref{difference2} it follows that
$$
\chi_{S_n}(D_n(G))=
\sum_{j=0}^{\lfloor n/2\rfloor} \chi^{(n-2j+1,1^{2j-1})}
=\chi^{(n-1,1)}+\chi^{(n-3,1^3)}+\chi^{(n-5,1^5)}+\cdots.
$$

\end{theorem}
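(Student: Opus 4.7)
The plan is to combine the restriction formula $\chi_n^z(G)\downarrow_{S_{n-1}}=\chi_{n-1}(G)$ from Theorem~\ref{corollary1} with the branching rule for hook shapes to pin down the hook multiplicities in $\chi_n^z(G)$ up to a single binary ambiguity, and then resolve that ambiguity by exhibiting a suitable test polynomial.

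First I would set up the recurrence. As recorded in the discussion preceding the theorem, Olsson's result~\cite{olsson} gives $\chi_n(G)=\sum_{j=0}^{n-1}\chi^{(n-j,1^j)}$, and $\chi_n^z(G)\le\chi_n(G)$ forces $a_j:=a_{(n-j,1^j)}\in\{0,1\}$ while all non-hook multiplicities vanish. The two removable corners of the hook $(n-j,1^j)$ give the branching rule
$$
\chi^{(n-j,1^j)}\downarrow_{S_{n-1}}=\chi^{(n-j-1,1^j)}+\chi^{(n-j,1^{j-1})},
$$
with the convention that each summand is omitted when its index is invalid (the first when $j=n-1$, the second when $j=0$). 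Plugging this into the restriction identity and equating with $\chi_{n-1}(G)=\sum_{k=0}^{n-2}\chi^{(n-1-k,1^k)}$, the coefficient of $\chi^{(n-1-k,1^k)}$ on each side yields
$$
a_k+a_{k+1}=1,\qquad 0\le k\le n-2.
$$
Since $a_j\in\{0,1\}$, the whole sequence is determined by $a_0$: either $a_j=1$ exactly for even $j$, or $a_j=1$ exactly for odd $j$.

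To fix $a_0$, I would test the symmetrization
$$
p:=\sum_{\sigma\in S_n} x_{\sigma(1)}\cdots x_{\sigma(n)},
$$
which generates the trivial $S_n$-isotypic component of $V_n$; verifying $p\notin Id^z(G)$ forces $a_0=1$. Using the disjoint substitution $\bar x_1=e_1$ (odd) and $\bar x_i=e_{2i-2}e_{2i-1}$ (even, hence central) for $2\le i\le n$, every $\bar x_i$ with $i\ge 2$ commutes with all factors, so each of the $n!$ summands of $p$ evaluates to the same element $e_1 e_2\cdots e_{2n-1}$. Hence $\bar p=n!\,e_1 e_2\cdots e_{2n-1}$, a nonzero odd element of $G$, which is not central. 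This gives $a_0=1$, the recurrence forces $a_{2j}=1$ and $a_{2j+1}=0$, and this is exactly the claimed decomposition of $\chi_n^z(G)$. The formula for $\chi_{S_n}(D_n(G))$ then follows from Lemma~\ref{new2} by subtracting from $\chi_n(G)$.

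The main obstacle is this last step: the restriction argument alone cannot separate the two alternating patterns, since both produce the correct dimension $c_n^z(G)=2^{n-2}$, so one genuinely needs a test polynomial that detects the trivial hook. Purely even substitutions yield a central image regardless of $\sigma$, and purely odd substitutions kill $p$ by antisymmetry; mixing exactly one odd generator with central ones keeps the parity of the image robust under $\sigma$ while leaving it nonzero by disjointness of supports, which is precisely what is needed.
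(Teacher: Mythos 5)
Your proposal is correct and follows essentially the same route as the paper: restrict $\chi_n^z(G)$ to $S_{n-1}$ via Theorem~\ref{corollary1}, apply the hook branching rule to obtain the alternating system $a_k+a_{k+1}=1$, and anchor it with $a_0=1$ coming from the non-centrality of the full symmetrization $e_{(n)}$. The only difference is that you verify $a_0=1$ explicitly with the disjoint substitution $\bar x_1=e_1$, $\bar x_i=e_{2i-2}e_{2i-1}$ (a correct and welcome elaboration), whereas the paper simply asserts that $e_{(n)}$ is clearly not central on $G$.
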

\begin{proof}
We know~\cite{olsson} that
\begin{eqnarray}\label{equation}\chi_n(G)=\sum_{k}\chi^{(n-k,1^k)}.
\end{eqnarray}
and
\begin{eqnarray}\label{end}
\chi^z_n(G)=\sum_{k\ge 0} a_k\cdot \chi^{(n-k,1^k)}
\end{eqnarray}
 where each multiplicity $a_k$ is either $=0$ or $=1$.

\medskip
The trivial character $\chi^{(n)}$ corresponds to the semi-idempotent $e_{(n)}=\sum_{\sigma\in S_n}\sigma$
and clearly  $e_{(n)}$ is not a central identity of $G$, hence $a_0=1$.
 By Equations~\eqref{ab2} and~\eqref{equation},
  $$\chi^z_n(G)\downarrow_{S_{n-1}}=\chi_{n-1}(G)=\sum_{j\ge 0}\chi^{(n-1-j,1^j)}.
  $$
Restrict now the $S_n$ character $\chi^z_n(G)$ down to $S_{n-1}$. By "branching", $\chi^{(n)}\downarrow _{S_{n-1}}=\chi^{(n-1)}$,
while for $k>0$
$$\chi^{(n-k,1^k)}\downarrow _{S_{n-1}}=\chi^{(n-k-1,1^k)}+\chi^{(n-k,1^{k-1})}.$$
   Together with~\eqref{end} we  obtain the following trivial system of equations for the multiplicities $a_k$:
$$
a_0=1, ~~a_0+a_1=1, ~~a_1+a_2=1, ~~a_2+a_3=1 \ldots
$$
and the proof follows.

\end{proof}

\subsubsection{$T$ generation of $Id^z(G)$}
We assume now that the characteristic of the base field is zero.
\begin{theorem}\label{corollary2}
The algebra $Id^z(G)$ of the central polynomials of $G$
contains $Id(G)$, and is $T$-generated as an $F$-algebra over $Id(G)$  by the commutator $[x,y]$.
In other words, the algebra $Id^z(G)$ of the central polynomials of $G$ is $T$-generated by $[[x,y],u]$
and by $[x,y]$.
\end{theorem}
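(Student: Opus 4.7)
My plan is to prove $Id^z(G)=J$, where $J$ denotes the smallest $T$-subalgebra of $F\langle x\rangle$ containing $Id(G)$ and the commutator $[x,y]$. The inclusion $J\subseteq Id^z(G)$ is immediate from $[x,y]\in Id^z(G)$, $Id(G)\subseteq Id^z(G)$, and the closure of $Id^z(G)$ under $T$-subalgebra operations; the content lies in the reverse inclusion. Because $\mathrm{char}\,F=0$, it suffices to show $V_n\cap Id^z(G)\subseteq J$ for every $n$.

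Given $f\in V_n\cap Id^z(G)$, I apply Lemma~\ref{cancellation} monomial-by-monomial to write
$$
f=x_n h_{n-1}+q_n,\qquad h_{n-1}\in V_{n-1},\qquad q_n\in V_n\cap Id^z(G).
$$
The crucial observation, visible from the proof of Lemma~\ref{cancellation} but not from its statement, is that $q_n$ is built as an explicit sum of commutators $[a,\,x_nb]$, one for each monomial $ax_nb$ of $f$. Each such $[a,\,x_nb]$ is a substitution of $[x,y]$ and hence lies in $J$, so $q_n\in J$. From $f,q_n\in Id^z(G)$ one then has $x_n h_{n-1}=f-q_n\in Id^z(G)$, and Lemma~\ref{cancellation2} forces $h_{n-1}\in Id(G)$; consequently $x_n h_{n-1}\in Id(G)\subseteq J$. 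Combining, $f\in J$, which proves the first formulation.

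The equivalent ``in other words'' form (that $\{[[x,y],u],[x,y]\}$ already $T$-generates $Id^z(G)$ without adjoining $Id(G)$ by hand) follows once I check that the $T$-subalgebra generated by this pair contains $Id(G)$. This reduces to showing every element of the form $A\,[[f_1,f_2],f_3]\,B$ is reachable; the Leibniz-type identity $A\,[[f_1,f_2],f_3]=[A[f_1,f_2],\,f_3]-[A,f_3][f_1,f_2]$, together with a symmetric manipulation for the right factor $B$, absorbs arbitrary outer multipliers into legal substitutions of $[x,y]$ and $[[x,y],u]$. I do not expect a substantive obstacle anywhere: the whole argument is a clean splice of Lemmas~\ref{cancellation} and~\ref{cancellation2}, and the one subtlety is that it hinges on the \emph{constructive} form of $q_n$ produced during the proof of Lemma~\ref{cancellation}---a sum of commutators of monomials---rather than the weaker statement that $q_n\in Id^z(G)$, since without this explicit form there would be no reason to expect $q_n\in J$.
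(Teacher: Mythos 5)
Your proposal is correct, and it reaches the conclusion by a more direct route than the paper. The paper's own proof defines $H$ to be the $T$-subalgebra generated over $Id(G)$ by $[x,y]$, observes $H\subseteq Id^z(G)$, and then \emph{reruns} the isomorphism argument of Theorem~\ref{corollary1} with $H$ in place of $Id^z(G)$ (this is exactly where it, too, silently uses that the $q_n$ of Lemma~\ref{cancellation} is a sum of commutators, hence lies in $H$); comparing the two isomorphisms gives $\dim(V_n\cap H)=\dim(V_n\cap Id^z(G))$, the containment upgrades this to $V_n\cap H=V_n\cap Id^z(G)$, and characteristic zero finishes. You use the same two engines --- the constructive decomposition $f=x_nh_{n-1}+q_n$ with $q_n$ an explicit sum of substitution instances of $[x,y]$, and Lemma~\ref{cancellation2} --- but splice them into a direct elementwise inclusion $V_n\cap Id^z(G)\subseteq J$, dispensing with the dimension count and the quotient isomorphism altogether; this also makes transparent the paper's closing remark that the multilinear statement holds in any characteristic. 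A further genuine addition on your side is the bridge between the two formulations: the paper treats ``$T$-generated over $Id(G)$ by $[x,y]$'' and ``$T$-generated by $[[x,y],u]$ and $[x,y]$'' as interchangeable without comment, whereas you note that one must absorb the $T$-\emph{ideal} $Id(G)=T([[x,y],u])$ into the $T$-subalgebra, and your identity $A[[f_1,f_2],f_3]=[A[f_1,f_2],f_3]-[A,f_3][f_1,f_2]$ is correct; just write out the right-hand absorption explicitly, e.g. $[c,f_3]B=[c,f_3B]-f_3[c,B]$ with $c=[f_1,f_2]$, followed by another left absorption, so that every term becomes a commutator or a product of commutators. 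With that small completion your argument is a full, self-contained proof.
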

\begin{proof}
Let $H\subseteq F\langle x \rangle$ be the subalgebra containing $Id(G)$, and $H$ is $T$-generated, as an $F$-algebra over $Id(G)$,  by the commutator $[x,y]$.
Thus $H\subseteq Id^z(G)$. Now the previous arguments all work with $H$ replacing $Id^z(G)$. In particular
$$
 \frac{V_{n-1}}{V_{n-1}\cap Id(G)}\cong\frac{V_{n}}{V_{n}\cap H}
$$
for all $n$. This implies that for all $n$, $\dim (V_n\cap H)=\dim (V_n\cap Id^z(G))$
and since $H\subseteq Id^z(G)$ it follows that for all $n$,  $V_n\cap H=V_n\cap Id^z(G)$. Since we assume that $char(F)=0$, this implies that $H=Id^z(G)$.

\end{proof}

In fact, the same argument shows that in any characteristic, the multilinear central polynomials of $G$
are  $T$-generated by $[[x,y],u]$ and by $[x,y]$.

\medskip
Motivated by Theorem~\ref{corollary2} we make the following conjecture.
\begin{conjecture}
In characteristic zero the central polynomials of a PI algebra $A$ satisfy the Specht property -- in the following sense:
the sub algebra $Id^z(A)\subseteq  F\langle X\rangle$ of the central polynomials of $A$
is $T$-generated {\it over} $Id(A)$ by a finite
set of central polynomials.
\end{conjecture}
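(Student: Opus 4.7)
The plan is to combine the strategy used above for $G$ with Kemer's structure theory for PI algebras in characteristic zero; for general $A$ the clean shift isomorphism that drove the Grassmann proof is unavailable, so the argument must be more indirect.

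First, I would reformulate the conjecture inside the relatively free algebra $R=F\langle X\rangle/Id(A)$ on countably many generators. In characteristic zero a polynomial $f$ is central on $A$ if and only if its image in $R$ is central (being central on the generic elements $x_i$ forces being central on all specializations), so $Id^z(A)/Id(A)\cong Z(R)$. The conjecture then reads: $Z(R)$ is finitely generated as a $T$-subalgebra of $R$, i.e.\ it is the closure of finitely many elements under addition, multiplication, and $F$-algebra endomorphisms of $R$.

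Second, I would invoke Kemer's theorem to replace $A$ by the Grassmann hull $G(B)$ of a finite-dimensional superalgebra $B=B_0\oplus B_1$. Since $Id(A)=Id(G(B))$, the algebra $R$ is the relatively free algebra of $G(B)$, and $Z(R)$ is controlled by the super-center of $B$: central elements come either from $Z(B_0)$-type polynomials or from even-length products involving $B_1$ that pick up signs from the central even-length Grassmann monomials. Using the finite-dimensionality of $B$, one writes down an explicit finite list of candidate $T$-generators---Formanek--Razmyslov central polynomials together with the commutator $[x,y]$ (which is central on the odd component, generalizing the Grassmann case of Theorem~\ref{corollary2}) and super-commutators built from a basis of $B$.

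The main obstacle is the third step: proving that such a finite list really does $T$-generate $Z(R)$ modulo $Id(A)$. The classical Specht problem was settled by Kemer using an induction on a ``Kemer index'' that is noetherian on the lattice of $T$-ideals; no analogous invariant is presently available for $T$-subalgebras of central polynomials, and the cocharacter bound $\chi_n^z(A)\le\chi_n(A)$ does not by itself give finiteness, because substituting into a central polynomial can enlarge the set of $\lambda$-types that appear in the $T$-subalgebra it generates. I would therefore attempt to introduce a ``central Kemer index'' measuring, for $f\in Z(R)$, the maximal partition shapes occurring in the $T$-subalgebra generated by $f$ inside the finite-dimensional model $G(B)$, and try a noetherian descent on this invariant. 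A more modest interim target---already of independent interest---is to verify the conjecture for the fundamental Kemer building blocks $M_k(F)$ and $M_{k,\ell}(G)$, where explicit trace and generic-matrix descriptions of the center are available and where the growth estimate $\lim_n\delta_n(M_k(F))^{1/n}=k^2$ proved later in the paper already identifies the ``size'' a finite generating set must account for.
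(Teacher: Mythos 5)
The statement you are trying to prove is not proved in the paper at all: it appears there as a conjecture, offered only as a speculation motivated by the Grassmann case (where $Id^z(G)$ is shown to be $T$-generated over $Id(G)$ by the single commutator $[x,y]$), so there is no proof of the paper against which to measure your argument. Your first two steps are sound reductions: the identification of $Id^z(A)/Id(A)$ with the center of the relatively free algebra $R=F\langle X\rangle/Id(A)$ is correct (since $f\in Id^z(A)$ exactly when $[f,x_{n+1}]\in Id(A)$ for a fresh variable), and because $Id^z(A)$ is determined by the $T$-ideal $Id(A)$ in this way, Kemer's theorem does allow you to replace $A$ by the Grassmann hull $G(B)$ of a finite-dimensional superalgebra without changing either $Id$ or $Id^z$.

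The genuine gap is your third step, and you acknowledge it yourself: everything of substance in the conjecture is the finiteness assertion, and you supply no mechanism that could yield it. The ``central Kemer index'' is only named, not defined; no well-founded (noetherian) ordering on $T$-subalgebras of central polynomials is constructed, and Kemer's induction does not transfer, precisely because $Id^z(A)$ is a $T$-subalgebra (a $T$-space closed under multiplication) rather than a $T$-ideal, so the ideal-theoretic manipulations underlying the Specht solution are unavailable. Your proposed candidate generating set is also shaky: $[x,y]$ is central for $G$ but not for a general Grassmann hull $G(B)$, and the Formanek--Razmyslov polynomials are tied to matrix algebras, so even the list whose sufficiency you would need to prove is not correctly specified in general. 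As written, the proposal is a reasonable research program (and the interim target of verifying the statement for the verbally simple blocks $M_k(F)$ and $M_{k,\ell}(G)$ is sensible), but it does not establish the conjecture, which remains open in the paper as well.
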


\section{The case $A=M_k(F)$}

\subsection{Further introductory remarks}
We are interested in the question
how many proper central polynomials the algebra $M_k(F)$ satisfies, namely,
how large is $\delta_n(M_k(F))$, and what is
the asymptotics  of $\delta_n(M_k(F))$ as $n$ goes to infinity. The case $k=1$ being trivial,
we assume  w.l.o.g here that $k\ge 2$.

\medskip
Lots of work was done on the invariants  $c_n(M_k(F))$ and $\chi_n(M_k(F))$. In particular it was proved that
\begin{eqnarray}\label{bliz}
\lim_{n\to\infty} c_n(M_k(F))^{1/n} = k^2,
\end{eqnarray}
see~\cite{regev3},~\cite[Theorem 5.10.2]{G.Z}.

\medskip
Turn now to central polynomials.
By~\cite{formanek.central}~\cite{raz} (see also~\cite{formanek.regev})  $M_k(F)$ satisfies proper central polynomials, hence
  $c_n^z(M_k(F))$ is strictly smaller than $ c_n(M_k(F))$.   Recall that
$$
D_n=\frac{V_n\cap Id^z(M_k(F)))}{ V_n\cap Id(M_k(F)))}\quad\mbox{and}\quad
\delta_n(M_k(F))=\dim (V_n\cap Id^z(M_k(F))) -\dim (V_n\cap Id(M_k(F))).
$$
The next theorem is the main result of this section. It
indicates that
$M_k(F)$ satisfies many proper central polynomials.

\begin{theorem}\label{main2}
Let $k\ge 2$
then (compare with~\eqref{bliz})
$$
\lim_{n\to\infty}(\delta_n(M_k(F)))^{1/n}=k^2.
$$
\end{theorem}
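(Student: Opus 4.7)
The upper bound $\limsup_{n\to\infty}\delta_n(M_k(F))^{1/n}\leq k^2$ is immediate from $\delta_n(M_k(F))\leq c_n(M_k(F))$ combined with~\eqref{bliz}, so the content of the theorem lies in the matching lower bound $\liminf_{n\to\infty}\delta_n(M_k(F))^{1/n}\geq k^2$.

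My plan for the lower bound is to produce, for every large $n$, a big subspace of $D_n(M_k(F))$ by substitution into a fixed proper central polynomial. By Formanek~\cite{formanek.central} and Razmyslov~\cite{raz}, there exists a proper multilinear central polynomial $p_0(z_1,\ldots,z_{m_0})\in V_{m_0}\cap Id^z(M_k(F))\setminus Id(M_k(F))$, of some fixed degree $m_0=m_0(k)$. For any $n\geq m_0$ and any composition $n=n_1+\cdots+n_{m_0}$ with each $n_i\geq 1$, I would substitute into $p_0$ a tuple of multilinear polynomials $f_i$ living in pairwise disjoint variable batches of sizes $n_1,\ldots,n_{m_0}$. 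Since $Id^z$ is closed under substitution and $Id$ is a $T$-ideal, the result $p_0(f_1,\ldots,f_{m_0})$ lies in $V_n\cap Id^z(M_k(F))$ and depends on each $f_i$ only modulo $V_{n_i}^{(i)}\cap Id(M_k(F))$. This defines a well-posed linear map
\begin{equation*}
\bar\Phi \colon \bigotimes_{i=1}^{m_0}\frac{V_{n_i}^{(i)}}{V_{n_i}^{(i)}\cap Id(M_k(F))}\;\longrightarrow\; D_n(M_k(F)),\qquad [f_1]\otimes\cdots\otimes[f_{m_0}]\;\longmapsto\;[p_0(f_1,\ldots,f_{m_0})].
\end{equation*}

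Granting that $\bar\Phi$ is injective, its rank equals $\prod_i c_{n_i}(M_k(F))$, so $\delta_n(M_k(F))\geq \prod_i c_{n_i}(M_k(F))$. Choosing $n_i\approx n/m_0$ so that each $n_i\to\infty$ as $n\to\infty$ ($m_0$ being fixed), applying~\eqref{bliz} to each factor $c_{n_i}(M_k(F))^{1/n_i}\to k^2$, and taking $n$-th roots gives $\liminf_{n\to\infty}\delta_n(M_k(F))^{1/n}\geq k^2$, which together with the upper bound completes the proof.

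The main obstacle is thus the injectivity of $\bar\Phi$. I would attack it by passing to generic matrices: a hypothetical relation $\sum_J c_J\,p_0(f_{1,j_1},\ldots,f_{m_0,j_{m_0}})\in Id(M_k(F))$, with $\{[f_{i,j}]\}_j$ a basis of $V_{n_i}^{(i)}/(V_{n_i}^{(i)}\cap Id(M_k(F)))$, translates, upon independently specializing the variables of each batch to generic $k\times k$ matrix entries, into a polynomial identity inside the algebra of generic matrices weighted by the scalar-valued multilinear function that $p_0$ induces on $m_0$-tuples of matrices. Because $p_0\notin Id(M_k(F))$, that scalar function is not identically zero, and the disjointness of the batches together with the domain-like structure of the generic matrix algebra should allow one to untangle the multi-indices $(j_1,\ldots,j_{m_0})$ and force each $c_J$ to vanish. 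Making this separation precise, so that non-vanishing of $p_0$ on one tuple of generic matrices really does propagate through the tensor factors, is the technical heart of the proof and the place where the specific nature of a proper central polynomial (rather than merely an identity) is essential.
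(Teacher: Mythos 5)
Your upper bound coincides with the paper's and is fine. The lower bound, however, has a genuine gap exactly at the point you flag, and the gap is worse than ``not yet proved'': the injectivity of $\bar\Phi$ is actually \emph{false} for the most natural choice of $p_0$. Take $k=2$ and let $p_0$ be the full multilinearization of $[x,y]^2$, a proper multilinear central polynomial of $M_2(F)$ of degree $4$; it is the sum of the four terms $[x_a,x_b][x_c,x_d]$ with $\{a,c\}=\{1,3\}$, $\{b,d\}=\{2,4\}$, so every term contains a commutator factor involving $x_1$. Now let $f_1$ be any multilinear \emph{proper central} polynomial of $M_2(F)$ in the first batch of variables (such exist for every admissible batch size $n_1\ge 4$, e.g.\ by substituting $x_1\to x_1x_5\cdots$ into the multilinearized $[x,y]^2$). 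All values of $f_1$ on $M_2(F)$ are scalar matrices, so every evaluation of $p_0(f_1,f_2,f_3,f_4)$ vanishes, i.e.\ $p_0(f_1,f_2,f_3,f_4)\in Id(M_2(F))$, while $[f_1]\otimes[f_2]\otimes[f_3]\otimes[f_4]\ne 0$. Hence $\ker\bar\Phi\ne 0$, and ``granting injectivity'' cannot be granted. To salvage your scheme you would need a $p_0$ with special evaluation properties and then a genuine independence theorem for disjoint-batch substitutions (even only a lower bound of order $(k^2)^{n(1-o(1))}$ on the rank of $\bar\Phi$); that is precisely the hard content, and the generic-matrix heuristic in your last paragraph does not yet supply it.

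The paper takes a different route that avoids any such disentangling. It works with a \emph{single} irreducible $S_n$-module: writing $n=k^2m+r$, it glues (Theorem~\ref{1.6}) tableaux of shapes $(2^{(k^2)})$ and $(3^{(k^2)})$ attached to the Capelli/Regev-type polynomials $g_2,g_3$ of Section~\ref{regev}, together with a one-row tableau carrying the remainder $r$, obtaining a tableau $T_\lambda$ of an almost rectangular shape $\lambda\vdash n$ of height $k^2$; Proposition~\ref{shem.1} (via Lemma~\ref{sham} and Lemma~\ref{18}) then yields $f^\lambda\le\delta_n(M_k(F))$, the point being that an irreducible module $FS_ne_{T_\lambda}$ meets an $S_n$-stable subspace either trivially or entirely -- irreducibility does for free the independence work your $\bar\Phi$ was supposed to do. The asymptotics come from $f^\lambda\ge f^\mu$ for the $k^2\times m$ rectangle $\mu$ and $\lim_{|\mu|\to\infty}(f^\mu)^{1/|\mu|}=k^2$. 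If you wish to keep the substitution idea, you will in effect have to reintroduce this representation-theoretic rigidity (or the special Property~G of the Regev polynomial) to control the kernel of $\bar\Phi$.
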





\subsection{Towards the proof of Theorem~\ref{main2}}
\subsubsection{A sandwich for $\delta_n(A)$}

Lemma~\ref{new2} already implies the upper bound
\begin{eqnarray}\label{upper.bound}
\delta_n(A)\le c_n(A).
\end{eqnarray}
 For the lower bound we need Proposition~\ref{shem.1} below, which we now prove.

\begin{lemma}\label{sham}
Let $H_n\subseteq V_n$ be a subspace satisfying $H_n\cap Id^z(A)=0$. Then $H_n$ imbeds in $D_n(A)$,
and in particular $\dim H_n\le dim D_n(A)=\delta_n(A)$.
\end{lemma}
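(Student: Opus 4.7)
For the proof plan, I take the hypothesis to mean that $H_n$ is a subspace of multilinear central polynomials in $V_n$ (so $H_n\subseteq V_n\cap Id^z(A)$) that meets $Id(A)$ only in $0$; this is the reading that makes the conclusion nontrivial and useful for lower-bounding $\delta_n(A)$, since $D_n(A)=(V_n\cap Id^z(A))/(V_n\cap Id(A))$ is itself assembled out of central polynomials. Under this reading the argument is a single-step consequence of the very definition of $D_n(A)$, and there is no real technical obstacle.

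Concretely, the plan is to write down the restriction
$$
\iota\colon H_n\longrightarrow D_n(A)=\frac{V_n\cap Id^z(A)}{V_n\cap Id(A)},\qquad h\longmapsto h+(V_n\cap Id(A)),
$$
of the canonical surjection $V_n\cap Id^z(A)\twoheadrightarrow D_n(A)$ to $H_n$. I would then verify two tiny points. First, $\iota$ is well-defined because every $h\in H_n$ already lies in the numerator $V_n\cap Id^z(A)$, so no coset calculation is needed. Second, the kernel is $\ker\iota=H_n\cap(V_n\cap Id(A))=H_n\cap Id(A)=0$ by hypothesis, so $\iota$ is an embedding and $\dim H_n\le\dim D_n(A)=\delta_n(A)$.

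The only "hard part" is rhetorical rather than mathematical: the lemma is a tautology once one unpacks $D_n(A)$, but it packages the recipe that will drive the lower bound in Theorem~\ref{main2}. I expect Proposition~\ref{shem.1} to carry out precisely the combinatorial work this lemma defers, namely exhibiting a space $H_n$ of multilinear central polynomials of $M_k(F)$ that is linearly independent modulo $Id(M_k(F))$ and whose dimension grows like $k^{2n}$, so as to match the known asymptotic $c_n(M_k(F))^{1/n}\to k^2$ of~\eqref{bliz}. Presumably this will be done by multiplying a Formanek--Razmyslov central polynomial against a family of representatives already known to realize that growth rate for $c_n(M_k(F))$.
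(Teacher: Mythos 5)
You did not prove the lemma as stated: you replaced the hypothesis ``$H_n\cap Id^z(A)=0$'' by ``$H_n\subseteq V_n\cap Id^z(A)$ and $H_n\cap Id(A)=0$''. Under your amended hypothesis the one-line argument (restrict the canonical surjection $V_n\cap Id^z(A)\twoheadrightarrow D_n(A)$ to $H_n$; the kernel is $H_n\cap Id(A)=0$) is correct, but it is a different lemma. The paper means the hypothesis literally: Lemma~\ref{18} and Proposition~\ref{shem.1} feed into this lemma precisely the modules $H_n=FS_ne_{T_\lambda}$ with $e_{T_\lambda}\notin Id^z(A)$, i.e.\ spaces meeting the central polynomials only in $0$ --- the opposite of your reading --- and the construction in the proof of Lemma~\ref{lower} produces \emph{non-central} $e_{T_\lambda}$. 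So your proposal, and your closing guess that Proposition~\ref{shem.1} will exhibit spaces of \emph{central} polynomials independent modulo $Id(M_k(F))$, do not match the argument the paper actually runs.

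That said, your instinct that only the amended hypothesis makes the conclusion tenable is sound, and you should have said so explicitly rather than silently re-interpreting. As literally stated the lemma fails: for $A=M_2(F)$ and $n=2$ one has $V_2\cap Id^z(A)=0$, hence $H_2=V_2$ satisfies $H_2\cap Id^z(A)=0$ with $\dim H_2=2$, while $\delta_2(M_2(F))=\dim D_2(M_2(F))=0$ (already $n=1$, $H_1=V_1$ gives $1\le 0$). The paper's own proof has the corresponding gap: its map sends $x$ to $x+(V_n\cap Id^z(A))$, so it embeds $H_n$ into $V_n/(V_n\cap Id^z(A))$, a space of dimension $c_n^z(A)$, and then identifies that quotient with $D_n(A)$; but $D_n(A)$ is the kernel, not the image, of the projection $V_n/(V_n\cap Id(A))\to V_n/(V_n\cap Id^z(A))$, and has dimension $c_n(A)-c_n^z(A)$, so injectivity only yields $\dim H_n\le c_n^z(A)$. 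Your version (a subspace of $V_n\cap Id^z(A)$ meeting $Id(A)$ trivially embeds in $D_n(A)$) is the statement that genuinely lower-bounds $\delta_n(A)$, but adopting it forces changes downstream: Proposition~\ref{shem.1} would need $e_{T_\lambda}$ to be proper central (central and not an identity), and the gluing construction in the proof of Lemma~\ref{lower} would have to be adjusted accordingly. The right move in your write-up is to state this discrepancy and the counterexample, not to present the amended lemma as if it were the one in the paper.
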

\begin{proof}
Map $\varphi:H_n\to D_n$ as follows:

$$
\varphi:V_n\longrightarrow ~\frac{V_n}{V_n\cap Id(A)}~\longrightarrow
\frac{V_n/V_n\cap Id(A)}{V_n/V_n\cap Id^z(A)}\cong D_n(A)
$$
via ~$x\in V_n$,~$~~\varphi:x\longrightarrow x+{V_n\cap Id(A)} \longrightarrow x+{V_n\cap Id^z(A)}=\varphi(x),$ well defined maps.
\medskip
If $x\in H_n$ and $\varphi(x)=0$
then $x\in {V_n\cap Id^z(A)}$ so $x=0$ since then
$x\in H_n\cap Id^z(A)=0$.
\end{proof}
Recall the identification $V_n=FS_n$.
\begin{lemma}\label{18}
Let $g\in V_n,$ $g\not\in Id^z(A)$ and assume $FS_ng$ is an irreducible left $S_n$ module. Then $FS_ng\cap Id^z(A)=0$.
\end{lemma}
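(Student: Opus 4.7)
The plan is to reduce the claim to a one-line Schur-irreducibility argument, the only real content being the verification that $V_n \cap Id^z(A)$ is actually an $S_n$-submodule of $V_n = FS_n$ under the left regular action.

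First, I would recall the identification $V_n = FS_n$, under which a permutation $\sigma \in S_n$ corresponds to the monomial $x_{\sigma(1)} \cdots x_{\sigma(n)}$, and the left $S_n$-action on $V_n$ is given by permuting the indices of the variables: $\tau \cdot f(x_1,\ldots,x_n) = f(x_{\tau(1)}, \ldots, x_{\tau(n)})$ (up to the standard convention). The crucial observation is that $Id^z(A)$ is closed under substitutions of variables, and in particular under relabeling $x_i \mapsto x_{\tau(i)}$; consequently $V_n \cap Id^z(A)$ is stable under the $S_n$-action, i.e.\ it is a left $FS_n$-submodule of $V_n$.

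Now consider $FS_n g \cap Id^z(A)$. Since $FS_n g \subseteq V_n$, this intersection coincides with $FS_n g \cap (V_n \cap Id^z(A))$, which is the intersection of two left $FS_n$-submodules of $V_n$ and is therefore itself an $FS_n$-submodule of $FS_n g$. By hypothesis $FS_n g$ is an irreducible left $FS_n$-module, so this submodule is either $0$ or all of $FS_n g$. In the latter case, $g = 1 \cdot g \in FS_n g \subseteq Id^z(A)$, contradicting the assumption $g \notin Id^z(A)$. Hence $FS_n g \cap Id^z(A) = 0$, as required.

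The only potential subtlety, and thus the "main obstacle," is the bookkeeping in the previous paragraph: one must be careful that the action of $S_n$ by which $FS_n g$ is generated as a left module is the same action under which $V_n \cap Id^z(A)$ is closed. This is a routine check — both come from relabeling variables via the identification $V_n \cong FS_n$ — but it is the step where a careless reader could be confused, so in the written proof I would state the identification explicitly and note that permuting variables is a valid substitution, hence preserves $Id^z(A)$. Once this is in place, the Schur-type alternative (submodule of an irreducible module is $0$ or the whole module) closes the argument immediately.
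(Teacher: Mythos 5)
Your proof is correct and follows essentially the same route as the paper: the paper takes a nonzero $h\in FS_ng\cap Id^z(A)$, uses irreducibility to get $FS_nh=FS_ng$, and concludes $g\in FS_nh\subseteq Id^z(A)$, which is exactly your ``submodule of an irreducible module is $0$ or everything'' alternative, relying on the same fact that $V_n\cap Id^z(A)$ is $S_n$-stable (you simply make that closure-under-relabeling step explicit, where the paper leaves it implicit).
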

\begin{proof}
Let $h\in FS_ng\cap Id^z(A)$. If $h\not = 0$ then $FS_nh=FS_ng$ (by irreducibility). Since
by assumption $h\in Id^z(A)$
we get $g\in FS_nh\subseteq Id^z(A)$ so $g\in Id^z(A)$, contradiction.
\end{proof}

The following proposition is one of the main tools for estimating $\delta_n(A)$. Of course, for the upper bound
we already have~\eqref{upper.bound}.
and as usual, the difficulties are with the lower bound.

\begin{proposition}\label{shem.1}
Let $\lambda\vdash n$ with a corresponding tableau $T_\lambda$ and semi-idempotent $e_{T_\lambda}$, and assume that $e_{T_\lambda}\not\in Id^z(A)$, then $f^\lambda\le \delta_n(A)$.
\end{proposition}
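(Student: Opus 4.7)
The plan is to apply Lemma~\ref{sham} to the concrete subspace $H_n := FS_n\, e_{T_\lambda}\subseteq V_n = FS_n$, which by standard representation theory of $S_n$ is an irreducible left $S_n$-module affording the character $\chi^\lambda$, so in particular $\dim H_n = f^\lambda$.

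First I would verify the hypothesis of Lemma~\ref{sham}, namely that $H_n \cap Id^z(A) = 0$. Since $e_{T_\lambda}\notin Id^z(A)$ by assumption, and since $FS_n\,e_{T_\lambda}$ is irreducible, this is exactly the content of Lemma~\ref{18} applied with $g = e_{T_\lambda}$: any nonzero element $h$ of $FS_n\,e_{T_\lambda} \cap Id^z(A)$ would generate the same irreducible module, forcing $e_{T_\lambda}\in FS_n h \subseteq Id^z(A)$, a contradiction.

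Then by Lemma~\ref{sham} the embedding $H_n \hookrightarrow D_n(A)$ gives
\[
f^\lambda \;=\; \dim H_n \;\le\; \dim D_n(A) \;=\; \delta_n(A),
\]
which is the claimed bound.

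There is essentially no obstacle: the two ingredients (Lemma~\ref{sham} and Lemma~\ref{18}) have already been set up precisely for this purpose, and the only small thing to notice is the standard identification $\dim FS_n\,e_{T_\lambda} = f^\lambda$, which holds because $e_{T_\lambda}$ is a semi-idempotent generating the irreducible $S_n$-module of shape $\lambda$.
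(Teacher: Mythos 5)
Your proof is correct and follows the paper's own argument exactly: set $H_n = FS_n e_{T_\lambda}$, use Lemma~\ref{18} to get $H_n \cap Id^z(A)=0$, and conclude via Lemma~\ref{sham} that $f^\lambda = \dim H_n \le \delta_n(A)$. The only difference is that you spell out the appeal to Lemma~\ref{18}, which the paper leaves implicit.
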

\begin{proof}
Let $FS_ne_{T_\lambda}=H_n,$ then $H_n\cap Id^z(A)=0$, so by Lemma~\ref{sham}
$f^\lambda =\dim H_n\le dim D_n(A)$.
\end{proof}

Recall from Lemma~\ref{new2} that
$\delta_n(M_k(F))=c_n(M_k(F))-c_n^z(M_k(F))$
and in particular\\ $\delta_n(M_k(F))\le c_n(M_k(F)).$ Together with Equation~\eqref{bliz} this proves
\begin{lemma}
$$
\lim_{n\to\infty} \delta_n(M_k(F))^{1/n} \le\lim_{n\to\infty} c_n(M_k(F))^{1/n} = k^2.
$$
\end{lemma}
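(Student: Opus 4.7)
The plan is to give a very short argument combining two ingredients already in hand: the identity $\delta_n(A) = c_n(A) - c_n^z(A)$ from Lemma~\ref{new2} and the known exponential bound~\eqref{bliz} for the ordinary codimensions of $M_k(F)$.

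First, I would specialize Lemma~\ref{new2} to $A = M_k(F)$ to write
\[
\delta_n(M_k(F)) = c_n(M_k(F)) - c_n^z(M_k(F)).
\]
Since $c_n^z(M_k(F)) = \dim(V_n/(V_n\cap Id^z(M_k(F))))$ is a dimension and hence non-negative, this gives the pointwise bound $\delta_n(M_k(F)) \le c_n(M_k(F))$, i.e.~inequality~\eqref{upper.bound} in the excerpt.

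Next, I would take $n$-th roots and pass to the limit (more precisely $\limsup$, since the existence of the limit for $\delta_n$ has not been asserted yet). Monotonicity of the $n$-th root gives $\delta_n(M_k(F))^{1/n} \le c_n(M_k(F))^{1/n}$, and taking $\limsup_{n\to\infty}$ on both sides yields
\[
\limsup_{n\to\infty} \delta_n(M_k(F))^{1/n} \;\le\; \lim_{n\to\infty} c_n(M_k(F))^{1/n}.
\]
Finally I would invoke~\eqref{bliz}, which cites~\cite{regev3} and~\cite[Theorem 5.10.2]{G.Z}, to identify the right-hand side as $k^2$, producing the claimed inequality.

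There is essentially no obstacle: the whole argument is a one-line consequence of Lemma~\ref{new2} plus the cited matrix-codimension asymptotics. The only thing to be slightly careful about is that, as written, the statement uses $\lim$ rather than $\limsup$; this is harmless here because the companion Theorem~\ref{main2} will eventually establish existence of the limit (in fact equal to $k^2$), but at this intermediate stage the inequality is best read as a $\limsup$ statement.
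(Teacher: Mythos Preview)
Your proposal is correct and matches the paper's own argument essentially line for line: the paper also deduces $\delta_n(M_k(F))\le c_n(M_k(F))$ from Lemma~\ref{new2} and then invokes~\eqref{bliz}. Your remark about reading the left-hand side as a $\limsup$ until Theorem~\ref{main2} is established is a fair caveat that the paper leaves implicit.
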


A lower bound is given in the following Lemma.

\begin{lemma}\label{lower}
We also have
$$
k^2\le \lim_{n\to\infty} \delta_n(M_k(F))^{1/n}.
$$
\end{lemma}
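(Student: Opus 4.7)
The plan is to use Proposition~\ref{shem.1} combined with Regev's strip technique. By Proposition~\ref{shem.1}, to bound $\delta_n(M_k(F))$ from below it suffices to exhibit, for each large $n$, a partition $\lambda_n\vdash n$ and a tableau $T_{\lambda_n}$ with $e_{T_{\lambda_n}}\notin Id^z(M_k(F))$ and with $f^{\lambda_n}$ growing exponentially at rate $k^2$; combined with the upper bound of the previous lemma this yields $\lim_{n\to\infty} \delta_n(M_k(F))^{1/n}=k^2$.

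The ``large $f^{\lambda_n}$'' ingredient is classical: the lower-bound half of~\eqref{bliz}, obtained via Regev's strip theorem (cf.~\cite[Theorem 5.10.2]{G.Z}), produces partitions $\lambda_n$ close to the $k^2$-row rectangle $((\lfloor n/k^2\rfloor)^{k^2})$ in the support of $\chi_n(M_k(F))$ with $(f^{\lambda_n})^{1/n}\to k^2$. Some standard tableau $T_{\lambda_n}$ automatically satisfies $e_{T_{\lambda_n}}\notin Id(M_k(F))$; the heart of the matter is to upgrade this to $e_{T_{\lambda_n}}\notin Id^z(M_k(F))$.

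To accomplish the upgrade, I would bootstrap from a proper central polynomial $c=c(y_1,\ldots,y_d)\in V_d\cap Id^z(M_k(F))\setminus Id(M_k(F))$ of fixed degree $d=d(k)$, available by~\cite{formanek.central} or~\cite{raz}. A direct check shows that for any $p\in V_{n-d}(x_{d+1},\ldots,x_n)$ the product $c\cdot p$ lies in $Id(M_k(F))$ iff $p$ does, and in $Id^z(M_k(F))$ iff $p$ does: the $y$-variables and $x$-variables may be substituted independently, and $c$ can be made to take any prescribed nonzero scalar value. The map $p\mapsto c\cdot p$ is therefore injective and induces an $FS_{n-d}$-equivariant embedding
\[
\bar\Psi\colon D_{n-d}(M_k(F))\hookrightarrow D_n(M_k(F)),
\]
giving $\delta_n(M_k(F))\ge \delta_{n-d}(M_k(F))$. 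Applied to the shape $\lambda_n\vdash n-d$ from the previous paragraph, Proposition~\ref{shem.1} at level $n-d$ reduces the whole lower bound to exhibiting \emph{one} tableau of that shape whose semi-idempotent is not a central polynomial.

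The main obstacle is precisely this last step: ruling out the degenerate scenario in which every semi-idempotent of the chosen near-rectangular shape evaluates only to scalar matrices on $M_k(F)$. I would address it by an explicit substitution into $M_k(F)$ (say, involving matrix units and diagonal matrices) producing a non-scalar value of $e_{T_{\lambda_n}}$; conceptually, for a rectangle with more than one row and more than one column the isotypic component of $\chi^{\lambda_n}$ cannot fit inside the one-dimensional center of $M_k(F)$ after evaluation at generic matrices. Once this is in place, $\delta_n(M_k(F))\ge f^{\lambda_n}$, and taking $n$-th roots gives $\lim_{n\to\infty} \delta_n(M_k(F))^{1/n}\ge k^2$, as needed.
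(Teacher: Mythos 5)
Your reduction step is fine as far as it goes: since a proper central polynomial $c$ of $M_k(F)$ attains a nonzero scalar value and its variables are disjoint from those of $p$, the equivalences $cp\in Id(M_k(F))\Leftrightarrow p\in Id(M_k(F))$ and $cp\in Id^z(M_k(F))\Leftrightarrow p\in Id^z(M_k(F))$ do hold, so $p\mapsto cp$ gives $\delta_n(M_k(F))\ge\delta_{n-d}(M_k(F))$. But that only shifts the index by a constant and contributes nothing to the actual difficulty, which you correctly isolate and then leave unresolved: producing, for shapes $\lambda\vdash n$ of height $k^2$ with $(f^{\lambda})^{1/n}\to k^2$, a concrete tableau $T_\lambda$ with $e_{T_\lambda}\notin Id^z(M_k(F))$ --- non-centrality, which is strictly stronger than the non-identity statement supplied by the codimension lower bound. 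Saying you ``would address it by an explicit substitution producing a non-scalar value'' is a statement of intent, not a proof, and the conceptual principle you offer in its place (an isotypic component of a shape with several rows and columns cannot evaluate into the one-dimensional center) is not valid as a general principle: for the Grassmann algebra the entire $(n-1,1)$-isotypic component consists of central polynomials, and for $M_k(F)$ itself the semi-idempotent attached to the $k^2\times 2$ rectangle by the Regev tableau is \emph{proper central} (that is the Formanek--Regev theorem), so a large ``fat'' shape by itself in no way guarantees a non-central semi-idempotent; equivalently, what you must show is that $\chi^\lambda$ occurs with nonzero multiplicity in $\chi^z_n(M_k(F))$ for your near-rectangular $\lambda$, and that is precisely the content of the lemma, not something that can be assumed.

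The paper fills exactly this hole by an explicit construction. Writing $n=k^2m+r$ with $0\le r<k^2$, it uses the Capelli-type polynomials $g_2$ and $g_3$ of Section~\ref{regev}, which are proper central by~\cite{formanek.regev} and satisfy Property G (every bijection of the variables with the $k^2$ matrix units gives the same nonzero scalar up to sign); by the gluing theorem (Theorem~\ref{1.6}) the semi-idempotent of the glued tableau of shape $\lambda=\mu*\pi$ (rectangle $\mu=(m^{(k^2)})$ plus a one-row tableau $\pi=(r)$) specializes, up to a nonzero integer factor, to $g_2^q\,g_3\,h_\pi$ (or $g_2^q\,h_\pi$), where $h_\pi=\sum_{\sigma\in S_r}x_{\sigma(1)}\cdots x_{\sigma(r)}$ is checked directly to be non-central. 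A bijective matrix-unit substitution then makes the rectangle factor a nonzero scalar and $h_\pi$ non-scalar, so the product is non-central, hence $e_{T_\lambda}\notin Id^z(M_k(F))$; Proposition~\ref{shem.1} gives $\delta_n(M_k(F))\ge f^\lambda\ge f^\mu$, and the known asymptotics of $f^\mu$ for the $k^2\times m$ rectangle finish the proof. Note in particular the role of the appended non-central factor $h_\pi$: it is needed exactly because the pure rectangular tableaux give \emph{central} semi-idempotents, the scenario your heuristic dismisses. Without this (or an equivalent) device your argument does not establish the lower bound.
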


{\bf Idea of the proof:}  For each $n$ we construct a partition $\lambda\vdash n$ with a tableau
$T_\lambda$ such that $e_{T_\lambda}$ is non central;  $\lambda$ is constructed such that
$k^2\le \lim_{n\to\infty}(f^\lambda)^{1/n}$.
Let $H_n=FS_ne_{T_\lambda}$, then
$H_n\cap Id^z(M_k(F))=0$. By Lemma~\ref{sham} $f^\lambda \le \delta_n(M_k(F))$, and the proof is complete.
That proof takes the rest of this paper.

\subsubsection{Gluing tableaux}

The proof of Lemma~\ref{lower} applies several ingredients which we briefly
review. We start with "gluing together" Young tableaux, see~\cite[Theorem 1.6]{regev1}.
 \medskip
 Let $\lambda=(a_1,\ldots, a_r)$ be a partition, with $a_1\ge\cdots\ge a_r \ge 1$, so $\ell(\lambda)=r$.
 Recall that $\lambda$ is identified with its Young diagram $D_\lambda$.
 Let $h_j(\lambda)=\lambda'_j$ denote the length of the $j$-th column of $\lambda$, so $h_1(\lambda)=\ell(\lambda)$,
 and $h_{a_1}(\lambda)$ is the length of the last (rightmost) column of $\lambda$.
Let $\mu=(b_1,\ldots,b_s)$ be a second partition, then denote
$\lambda*\mu:=(\lambda_1+\mu_1,\lambda_2+\mu_2,\ldots )\vdash |\lambda|+|\mu|$.
If $h_{a_1}(\lambda)\ge h_1(\mu)$ then $D_\mu$ can be glued to the right of $D_\lambda$ and in that case
we denote $D_{\lambda*\mu}=D_\lambda | D_\mu$.

\medskip
  A tableau $T_\lambda$ of shape $\lambda$
 and with entries $t_{i,j}$ is denoted $T_\lambda=D_\lambda(t_{i,j})$. Also let $T_\mu=D_\mu(u_{i,j})$
 be a second tableau of shape $\mu$, and assume $D_{\lambda*\mu}=D_\lambda | D_\mu$. Then glue $T_\lambda$ with
 $T_{\mu}$ by constructing $T_{\lambda*\mu}=D_\lambda(t_{i,j})\mid D_\mu(u_{i,j}+|\lambda|)$.

\medskip
As usual the tableau $T_\lambda$ corresponds to the semi-idempotent $e_{T_\lambda}$ in $FS_{|\lambda|}$.
We have
\begin{theorem}\label{1.6}~\cite[Theorem 1.6]{regev1}
Let $\lambda=(a_1,\ldots,a_r)\vdash m$, $\mu\vdash n$ with corresponding tableaux $T_\lambda$ and $ T_\mu$ and
assume $h_{a_1}(\lambda)\ge h_1(\mu)$, namely $D_\lambda$ and $ D_\mu$ can be glued to form
$D_{\lambda*\mu}=D_\lambda\mid D_\mu$.
Let
$T_{\lambda*\mu}=D_\lambda(t_{i,j})\mid D_\mu(u_{i,j}+m)=
D_{\lambda\mid \mu}(w_{i,j})$ and substitute $w_{i,j}\to y_i$. Then
there exists $d\in\mathbb N$ such that
$$
e_{T_{\lambda*\mu}}(y)=d\cdot e_{T_\lambda}(y)\cdot e_{T_\mu}(y).
$$

\end{theorem}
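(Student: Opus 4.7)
The plan is to factor $e_{T_{\lambda*\mu}}$ inside the group algebra $FS_{m+n}$, reduce it to a sum of left multiples of the product $e_{T_\lambda}\cdot e_{T_\mu}$, and then use the fact that left multiplication by a row-preserving permutation acts trivially on the polynomial obtained under the substitution $w_{i,j}\mapsto y_i$.

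First I would record two structural observations about $T_{\lambda*\mu}=T_\lambda\mid T_\mu$. Because every column of $T_{\lambda*\mu}$ lies entirely inside $T_\lambda$ or entirely inside $T_\mu$, the column stabilizer splits as a direct product
$$
C_{T_{\lambda*\mu}}=C_{T_\lambda}\times C_{T_\mu},\qquad b_{T_{\lambda*\mu}}=b_{T_\lambda}\cdot b_{T_\mu}.
$$
The row stabilizer only contains $R_{T_\lambda}\times R_{T_\mu}$ as a subgroup, with index
$$
d=\prod_{i=1}^{\ell(\mu)}\binom{a_i+b_i}{a_i}\in\mathbb N,\qquad b_i:=\mu_i,
$$
the extra cosets corresponding to shuffling the $T_\mu$-entries of row $i$ among the $T_\lambda$-entries of row $i$ (the gluing hypothesis $h_{a_1}(\lambda)\ge h_1(\mu)$ guarantees $\ell(\mu)\le\ell(\lambda)$, so this product is well defined).

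Next I would choose a left transversal $X\subseteq R_{T_{\lambda*\mu}}$ of $R_{T_\lambda}\times R_{T_\mu}$ and write $a_{T_{\lambda*\mu}}=\bigl(\sum_{x\in X}x\bigr)\,a_{T_\lambda}\,a_{T_\mu}$. Since the entry sets of $T_\lambda$ and $T_\mu$ are disjoint, $a_{T_\mu}$ and $b_{T_\lambda}$ lie in commuting subgroups of $S_{m+n}$; combining this with the column splitting gives the key identity
$$
e_{T_{\lambda*\mu}}=a_{T_{\lambda*\mu}}b_{T_{\lambda*\mu}}=\Big(\sum_{x\in X}x\Big)\cdot e_{T_\lambda}\cdot e_{T_\mu}
$$
in $FS_{m+n}$, where $e_{T_\lambda}\in FS_m$ and $e_{T_\mu}$ acts on the remaining $n$ letters. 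Passing through the identification $V_{m+n}\cong FS_{m+n}$ and then applying the substitution $w_{i,j}\mapsto y_i$, left multiplication by $\sigma\in S_{m+n}$ corresponds to the variable-index permutation $x_k\mapsto x_{\sigma(k)}$; but each $x\in X$ is row-preserving in $T_{\lambda*\mu}$, so after substitution it sends $y_i$ to $y_i$ in every slot, i.e., acts as the identity on the $y$-polynomial. Therefore
$$
e_{T_{\lambda*\mu}}(y)=\sum_{x\in X}\bigl(x\cdot e_{T_\lambda}\cdot e_{T_\mu}\bigr)(y)=|X|\cdot e_{T_\lambda}(y)\cdot e_{T_\mu}(y)=d\cdot e_{T_\lambda}(y)\cdot e_{T_\mu}(y).
$$

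The main obstacle is the last step: one must keep the conventions between $FS_{m+n}$ and multilinear polynomials straight so that left multiplication by a row-stabilizing element really does act as the identity after the $y$-substitution. Once that bookkeeping is in place the rest is a pure group-algebra manipulation, and the integer $d$ emerges automatically as the number of row-shuffles between $T_\lambda$ and $T_\mu$.
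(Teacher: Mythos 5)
The paper itself offers no proof of Theorem~\ref{1.6}: it is imported from \cite{regev1}, so your argument has to stand on its own --- and it does. Your decomposition is correct: the gluing hypothesis $h_{a_1}(\lambda)\ge h_1(\mu)$ forces $\lambda_1=\cdots=\lambda_{\ell(\mu)}=a_1$, hence every column of $T_{\lambda*\mu}$ is a column of $T_\lambda$ or of the shifted $T_\mu$, giving $b_{T_{\lambda*\mu}}=b_{T_\lambda}b_{T_\mu}$, while $R_{T_{\lambda*\mu}}\supseteq R_{T_\lambda}\times R_{T_\mu}$ with a left transversal $X$ of size $\prod_i\binom{\lambda_i+\mu_i}{\lambda_i}$; disjointness of the two letter sets lets you commute $a_{T_\mu}$ past $b_{T_\lambda}$ and obtain $e_{T_{\lambda*\mu}}=\bigl(\sum_{x\in X}x\bigr)e_{T_\lambda}e_{T_\mu}$, and since under $\sigma\leftrightarrow x_{\sigma(1)}\cdots x_{\sigma(m+n)}$ left multiplication by a row-preserving permutation is a variable renaming that the substitution $w_{i,j}\mapsto y_i$ collapses to the identity, you get $d=|X|$ (the concatenation of disjoint monomials also matches the product $e_{T_\lambda}(y)\cdot e_{T_\mu}(y)$, as you implicitly use). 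Two bookkeeping points deserve to be explicit. First, the argument is tied to the convention $e_T=a_Tb_T$ together with the stated identification of $FS_{m+n}$ with multilinear polynomials; with the convention $e_T=b_Ta_T$ the transversal ends up on the right, where it permutes positions rather than renames variables and does \emph{not} act trivially after substitution (a rank-one example already breaks the naive identity), so one must then switch to the inverse identification or rework the factorization. Second, the gluing hypothesis does more than ensure $\ell(\mu)\le\ell(\lambda)$: it is exactly what makes the first $\ell(\mu)$ rows of $\lambda$ all have length $a_1$, without which the columns of $D_{\lambda*\mu}$ would mix cells of $D_\lambda$ and $D_\mu$ and the column stabilizer would not split. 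With those points made precise your proof is complete, and it even identifies the constant $d$ explicitly, which the statement leaves unspecified.
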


\subsection{Capelli-type polynomials}\label{regev}
Another main ingredient in the proof of Lemma~\ref{lower} are the Capelli-type polynomials, called sometime Regev-polynomials~\cite{formanek.regev},~\cite{GV}.
 These polynomials correspond to
rectangles of height $k^2$. The case of the $k^2\times 2$ rectangle (namely the partition
$(2^{(k^2)})$) is done in details in~\cite{formanek.regev}.

\subsubsection{Two sets of variables}\label{two}

 In the case of two sets of variables the polynomial is constructed
as follows: first construct the monomial
\begin{eqnarray}\label{monomial2}
M(x,y)=(x)(y)(xxx)(yyy)(xxxxx)(yyyyy)\cdots
\end{eqnarray}
of pairs of blocks of the odd lengths $1,3,5,\ldots ,2k-1$, then alternate the $x'$s and independently
alternate the $y'$s. Thus the corresponding polynomial is
$$
L(x,y)=\sum_{\sigma\in S_{k^2}}\sum_{\tau\in S_{k^2}}sgn (\sigma)sgn(\tau)M(x_{\sigma},y_{\tau}),
$$
in two sets of $k^2$ alternating variables. It corresponds to a particular tableau on
the $k^2\times 2$ rectangle, namely
the partition $(2^{(k^2)})$.
By~\cite{formanek.regev} $L(x,y)$ is a proper central polynomial
of $M_k(F)$.
It also satisfies

\medskip

{\bf Property L}: For any bijection $x_u\leftrightarrow \bar x_u =e_{i,j}$
between the set $\{x_1,\ldots,x_{k^2}\}$ and the set $\{e_{i,j}\mid 1\le i,j\le k\},$ and similarly
$y_v\leftrightarrow \bar y_v= e_{i,j}$
we get, up to a $\pm$ sign, the same {\it non zero} scalar value $L(\bar x, \bar y)$.

\medskip
Equating $x_i=y_i, ~i=1,\ldots,k^2$ we obtain $g_2(x)=L(x,x)=g_2(x_1,\ldots,x_{k^2})$, and {\it Property L}
implies {\it Property G:}

\medskip
{\bf Property G:} For any bijection $x_u\leftrightarrow \bar x_u =e_{i,j}$
we get, up to a $\pm$ sign, the same non zero scalar value $g_2(\bar x)$.

\medskip
{\bf Note} that $g_2(x)$ is of degree 2 in each $x_i$. Its multilinearization yields
$d\cdot e_{T_\nu}\in S_{2k^2}$, where $d\ne 0$, $\nu = (2^{(k^2)})$ and $T_\nu$ is is the tableau
of shape $\nu$
corresponding to the monomial~\eqref{monomial2}.  $g_2(x)$ is proper central, therefore so is
 $e_{T_\nu}$.

\subsubsection{Three sets of variables}
Instead of two sets of variables we can repeat that construction with, say, three sets of $k^2$ alternating variables, starting with the monomial
\begin{eqnarray}\label{monomial3}
M(x,y,z)=(x)(y)(z)(xxx)(yyy)(zzz)(xxxxx)(yyyyy)(zzzzz)\cdots
\end{eqnarray}
It corresponds to
a particular tableau on
the $k^2\times 3$ rectangle,
namely on the partition $(3^{(k^2)})$. Similar to Section~\ref{two}, the resulting polynomial $L(x,y,z)$, in three sets of $k^2$ alternating variables is proper central for $M_k(F)$.
Equating $x_i=y_i=z_i, ~i=1,\ldots,k^2$ we obtain $g_3(x)=L(x,x,x)=g_3(x_1,\ldots,x_{k^2})$, and
it satisfies the analogue properties L and G in three sets of $k^2$ alternating variables.

\subsection{The proof of Lemma~\ref{lower} and Theorem~\ref{main2} }
\begin{proof}
We need to show that
$$
k^2\le \lim_{n\to\infty} \delta_n(M_k(F))^{1/n}.
$$
Let $n\ge 2k^2$ (this is not a restriction since we later send $n$ to infinity) and
 write it as $n=k^2m+r$ where $0\le r<k^2$, so $m\ge 2$.

\medskip
Case 1: $m$ is odd. In that case $m\ge 3$ so $m=2q+3$ where $q\ge 0$.

\medskip

Case 2: $m=2q$ is even, $q\ge 1$.

\medskip
The summand $k^2m$ of $n$ determines the $k^2\times m$ rectangle.
In case 1,  $k^2m=k^22q+k^23$ so the $k^2\times m$ rectangle is obtained by gluing $q$
$~k^2\times 2 $ rectangles, plus the single $k^2\times 3 $ rectangle.
In case 2, $k^2m=k^22q$ so the $k^2\times m$ rectangle is obtained by gluing $q$
$~k^2\times 2$ rectangles. Each $k^2\times 2$ rectangle,  with its particular tableau,
 corresponds to $g_2(x)$ of Section~\ref{regev}. Similarly
the $k^2\times 3 $ rectangle with  its particular tableau corresponds to $g_3(x)$.

\medskip


 In case 1, by the gluing technique we glue together $q$ such tableaux each of shape
 $(2^{(k^2)})$, together with the single tableau os shape $(3^{(k^2)})$.
The corresponding shape is  $(m^{(k^2)})$ -- with the corresponding polynomial $g_2(x)^q\cdot g_3(x)$,
 which is proper central of $M_k(F)$, since both $g_2(x)$ and $g_3(x)$ are such.

 \medskip
 In case 2 we glue together $q$ such tableaux each of shape
 $(2^{(k^2)})$.
The corresponding shape is  $((2q)^{(k^2)})$ -- with the corresponding polynomial $g_2(x)^q$,
 which again is proper central of $M_k(F)$.

\medskip
Turn now to the integer $0\le r< k^2$. Construct a partition $\pi\vdash r$
together with a tableau $T_\pi$ of shape $\pi$ and with a corresponding polynomial
$h_\pi=h_{T_\pi}$, such that $h_\pi(x)$ is {\it non} central for $M_k(F)$.  Indeed if $r=0$ then we are
already done, so assume $r\ge 1$. Then, for example, choose $\pi=(r)$, $T_\pi$ is the one-row tableau
filled with $1,\ldots,r$. Then $h_\pi(x)=\sum_{\sigma\in S_r}x_{\sigma (1)}\cdots x_{\sigma(r)}$
which is non central for $M_k(F)$: for example for $i=1,\ldots,r$ substitute $x_i\to\tilde x_i=diagonal(1,2,\ldots,k)$,
then $h_\pi(\tilde x)=r!\cdot diagonal(1^r,2^r,\ldots,k^r),$ which is non central since $k\ge 2$.

\medskip
Since $h_\pi(x)$ is multilinear and of degree $r<k^2$, there exist an injection
$$
x_d\to \bar x_d \in \{e_{i,j}\mid 1\le i,j\le k\}\quad d=1,\ldots,r
$$
such that $h_\pi(\bar x)$ is
non central. Complete this substitution to a bijection
 $x_d\longleftrightarrow\bar x_d=e_{i,j}$ of $x_1,\ldots,x_{k^2}$
with the $k^2$ elements $\{e_{i,j}\mid 1\le i,j\le k\}$,  so $g_2(\bar x)$ and $g_3(\bar x)$
 are proper central
and $h_\pi(\bar x)$ is non-central -- of $M_k(F)$.

\medskip
 Glue now  $T_\pi$ to the right of $T_\mu$; this can be done since $\ell(\pi)\le k^2$,
and we obtain the tableau $T_\mu *T_\pi$.
  In case 1  that tableau $ T_\mu *T_\pi$ corresponds to
the polynomial $g_2(x)^q\cdot g_3(x)\cdot h_\pi(x)$. The
shape of  $ T_\mu *T_\pi$ is the partition
 $$
 \lambda=\mu *\pi=((2q+3)^{(k^2)}) *\pi =(2q+3+\pi_1,\ldots,2q+3+\pi_{k^2})\vdash (2q+3)k^2 +r=n.
 $$
Similarly in case 2
$$\lambda=\mu *\pi=((2q)^{(k^2)}) *\pi =(2q+\pi_1,\ldots,2q+\pi_{k^2})\vdash 2qk^2 +r=n.$$

 Since ~{\it proper-central} ~multiplied by ~{\it non-central} ~is ~{\it non-central},~
 hence in case 1 \\$g_2(x)^q\cdot g_3(x)\cdot h_\pi(x)$ is
a {\it non central} polynomial of $M_k(F)$; it corresponds to $\lambda$, with the corresponding tableau $T_\lambda$,
therefore $e_{T_\lambda}$ is non central. By Proposition~\ref{shem.1} this implies that
$\delta _n(M_k(F))\ge f^\lambda $.
Similarly in case 2.

\medskip
Now send $n$ (hence also $m$ and $q$) to infinity, then  $\mu$ becomes a large rectangle (of fixed height $k^2$) and
 $\pi\vdash r$, $0\le r<k^2$, is a small extra part of  $\lambda=\mu*\pi$. For the $k^2\times m$ rectangle $\mu=(m^{(k^2)})$
it is known that
$$\lim_{m\to \infty}(f^\mu)^{1/|\mu|}=  \lim_{|\mu|\to \infty}(f^\mu)^{1/|\mu|}=k^2   $$
see for example~\cite[Section 3]{explicit}.
 Also, by "Branching", $f^\lambda\ge f^\mu$ since $\lambda\supseteq \mu$.
Hence as $n,$  $m$ and $q$ go to infinity,
$$
 (\delta_n(M_k(F)))^{1/n}    \ge (f^\lambda)^{1/n}\ge (f^\mu)^{1/n}=((f^\mu)^{1/|\mu|})^{|\mu| /n  }\to (k^2)^{|\mu| /n}\to k^2
$$
since $|\mu|/n\to 1.$
The proof of Lemma~\ref{lower} now follows.
\end{proof}

\subsection{Some conjectures}
\begin{conjecture}\label{C1}
For any integer $k\in\mathbb N$ there exist a real number $0\le \alpha_k \le 1$ such that
\begin{eqnarray}\label{ratio}
\lim_{n\to\infty} \frac{c_n^z(M_k(F))}{c_n(M_k(F))} =\alpha_k.
\end{eqnarray}
\end{conjecture}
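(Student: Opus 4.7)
The plan is to prove the conjecture by establishing matching refined asymptotics for both $c_n(M_k(F))$ and $c_n^z(M_k(F))$, pushing beyond the exponential factor $k^{2n}$. By the Giambruno--Zaicev precise asymptotic (see \cite{G.Z}),
$$c_n(M_k(F))\;\sim\;C_k\cdot n^{g_k}\cdot k^{2n},\qquad g_k=-\tfrac{k^2-1}{2},$$
for an explicit constant $C_k>0$. The target is an analogous formula
$$c_n^z(M_k(F))\;\sim\;C_k^z\cdot n^{g_k}\cdot k^{2n},$$
with the \emph{same} polynomial exponent and some $C_k^z\in[0,C_k]$; the conjecture then follows with $\alpha_k=C_k^z/C_k\in[0,1]$.

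I would proceed in three steps. First, expand both cocharacters in irreducibles, $\chi_n(M_k(F))=\sum_\lambda m_\lambda\chi^\lambda$ and $\chi_n^z(M_k(F))=\sum_\lambda m_\lambda^z\chi^\lambda$, noting that $0\le m_\lambda^z\le m_\lambda$ and that both sums are supported in the $k^2$-strip $\ell(\lambda)\le k^2$. Second, invoke the hook-strip analysis of \cite{regev3} and \cite{G.Z} to isolate the dominant region: the mass of $c_n(M_k(F))$ concentrates on partitions whose normalized shape $\lambda/n$ lies in a small neighbourhood of a specific optimal profile, while the complement contributes at strictly smaller exponential order. Third, show that $\chi_n^z(M_k(F))$ inherits the same shape-concentration and that the ratio $m_\lambda^z/m_\lambda$ tends to a common limit $\alpha_k$ uniformly on the dominant region; summing $f^\lambda m_\lambda^z=(\alpha_k+o(1))f^\lambda m_\lambda$ over this region then yields the target asymptotic.

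The hard step is the third one. \emph{A priori}, $c_n^z$ could carry a different polynomial correction from $c_n$, in which case the ratio would collapse to $0$ or $1$ rather than landing in the open interval. Ruling this out appears to require a representation-theoretic identification of $Id^z(M_k(F))/Id(M_k(F))$ inside the relatively free algebra of generic $k\times k$ matrices, along the lines of realizing it as the pullback of the centre of the trace algebra under the projection dual to $[\,\cdot\,,\,\cdot\,]$, after which the Weyl-integral asymptotics that control $c_n$ could be transferred to the central subspace. A weaker intermediate goal, already within reach of the methods of the present paper, would be to prove $\liminf_{n\to\infty}c_n^z(M_k(F))/c_n(M_k(F))>0$ via a Fekete-type subadditivity argument exploiting the $T$-subalgebra structure of $Id^z$ together with the equality $\lim_{n\to\infty}\delta_n(M_k(F))^{1/n}=k^2$ provided by Theorem~\ref{main2}.
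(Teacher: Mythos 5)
This statement is posed in the paper as an open conjecture; the paper contains no proof of it, so there is nothing to compare your argument against except the conjecture itself. Your proposal, as written, does not close it either: it is a program whose decisive step is assumed rather than proved. Steps one and two are unobjectionable (the strip condition $\ell(\lambda)\le k^2$, the inequality $m_\lambda^z\le m_\lambda$, and the precise asymptotics $c_n(M_k(F))\sim C_k n^{-(k^2-1)/2}k^{2n}$ are all known), but step three --- that the multiplicity ratios $m_\lambda^z/m_\lambda$ converge uniformly on the dominant shapes, or equivalently that $c_n^z(M_k(F))$ carries the same polynomial correction $n^{-(k^2-1)/2}$ with a bona fide constant --- is precisely the content of the conjecture (indeed a strengthening of it), and you acknowledge you do not know how to prove it. A referee would therefore read your text as a plausible strategy, in the spirit of Berele's trace-cocharacter heuristics, not as a proof.

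The fallback you offer is also flawed. From Theorem~\ref{main2} you only know $\lim_n \delta_n(M_k(F))^{1/n}=k^2$, i.e.\ information about the exponential rate of $c_n(M_k(F))-c_n^z(M_k(F))$; this gives no lower bound whatsoever on $c_n^z(M_k(F))$, and is fully consistent with $c_n^z(M_k(F))/c_n(M_k(F))\to 0$ (or with the ratio oscillating), since exponential rates are blind to the polynomial factors that decide the ratio. So the claimed route to $\liminf_n c_n^z(M_k(F))/c_n(M_k(F))>0$ is a non sequitur. Moreover, a Fekete-type argument would require a super- or sub-multiplicativity property of the sequence $c_n^z$, and because $Id^z(A)$ is only a $T$-subalgebra (not a $T$-ideal), the standard codimension argument does not transfer; you would have to establish such a property first, and even then Fekete would only control $\lim (c_n^z)^{1/n}$, not the ratio in~\eqref{ratio}. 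If you want a genuinely attackable intermediate statement, the more promising route is the one sketched in Berele's remarks: compare $\chi_n^z(M_k(F))$ with the pure/mixed trace cocharacters and show the multiplicities agree whenever $\lambda_{k^2}\ge 2$, then bound the contribution of the remaining shapes; that is the kind of input that could pin down the polynomial part of $c_n^z$ and make your step three precise.
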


More generally, an analogue of~\eqref{ratio} holds when $M_k(F)$ is replaced by a more
general algebra $A$.

\medskip
\begin{conjecture}
Giambruno and Zaicev~\cite{G.Z2},~\cite{G.Z} proved that for any PI algebra $A$, when $n$ goes to infinity
$$
c_n(A)\sim \alpha\cdot n^g\cdot d^n
$$
where $d$ is an integer. In addition Berele~\cite{berele} proved that $g\in \frac{1}{2}\mathbb Z$.

\medskip
We tend to conjecture that the analogue theorems hold for $\delta_n(M_k(F))$,
and probably for other algebras $A$.

\medskip
\end{conjecture}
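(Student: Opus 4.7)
The plan is to attack the conjecture via the identity $\delta_n(M_k(F)) = c_n(M_k(F)) - c_n^z(M_k(F))$ from Lemma~\ref{new2}. By Giambruno--Zaicev together with Berele's refinement, $c_n(M_k(F)) \sim \alpha\cdot n^g\cdot(k^2)^n$ with $\alpha > 0$ and $g \in \tfrac{1}{2}\mathbb{Z}$, and by Theorem~\ref{main2} the exponential rate for $\delta_n(M_k(F))$ is also $k^2$. So the target reduces to showing that $c_n^z(M_k(F))$ admits an asymptotic of the same shape $\alpha^z\cdot n^{g^z}\cdot(k^2)^n$ with $g^z \in \tfrac{1}{2}\mathbb{Z}$, and either $\alpha^z < \alpha$ (in which case the conjecture follows at once with the same $g$) or, if $\alpha^z = \alpha$, extracting the next-order asymptotic of the difference.

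A natural first attempt is to prove the stronger Conjecture~\ref{C1}. The Capelli-type polynomials of Section~\ref{regev}, glued via Theorem~\ref{1.6} to arbitrary tails $T_\pi$, yield proper central polynomials whose associated shapes $\lambda$ run over all partitions containing the $k^2\times m$ rectangle. Counting these systematically, using the fact that the multiplicities $m_\lambda$ in $\chi_n(M_k(F))$ are polynomial in the row-differences (via the Procesi--Drensky description of the matrix cocharacter in terms of concomitants), should give a lower bound for $c_n^z$ matching a fixed proportion of $c_n$, hence $\alpha_k > 0$; an analogous upper bound, showing that a positive proportion of multiplicities are strictly smaller on the central side, would establish $\alpha_k < 1$. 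The existence of at least one non-central tableau per such shape already drives the proof of Lemma~\ref{lower}, and it would need to be upgraded to a quantitative count.

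To produce the half-integer exponent, one would adapt Berele's argument, which derives $g \in \tfrac{1}{2}\mathbb{Z}$ from the rationality (with cyclotomic denominator) of the multigraded Hilbert series of the relatively free algebra $F\langle X\rangle/Id(M_k(F))$. The corresponding object on the central side is the Hilbert series of $F\langle X\rangle/Id^z(M_k(F))$, graded by the natural $S_n$-action; if one can show that it is a rational function of the same cyclotomic type, then Berele's transfer theorem yields $g^z \in \tfrac{1}{2}\mathbb{Z}$, and subtraction gives the conjecture. A parallel argument applied to the Hilbert series of $(V_n\cap Id^z(M_k(F)))/(V_n\cap Id(M_k(F)))$ would target the $\delta_n$ asymptotic directly.

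The principal obstacle is precisely this rationality question. The theory of trace identities and concomitants (Procesi, Razmyslov) provides the structural input that makes the ordinary cocharacter of $M_k(F)$ tractable; an analogous structural description of $Id^z(M_k(F))$ is not available in the literature, and even its $T$-subalgebra generators over $Id(M_k(F))$ (in the spirit of Theorem~\ref{corollary2} for the Grassmann algebra) are unknown for $k\ge 2$. A reasonable concrete test case is $k=2$, where $\chi_n(M_2(F))$ has been computed explicitly by Formanek and Drensky: carrying out the analogous computation of $\chi_n^z(M_2(F))$ in closed form would both verify the conjecture in this case and suggest the correct general structural statement to prove.
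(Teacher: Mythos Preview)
The statement you were given is a \emph{conjecture}, not a theorem: the paper states it in the closing ``Some conjectures'' section and offers no proof whatsoever. There is therefore nothing in the paper to compare your attempt against.

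Your proposal is not a proof either, and you essentially acknowledge this. What you have written is a research plan: reduce to the asymptotics of $c_n^z(M_k(F))$, hope for a rationality result for the central Hilbert series parallel to the one Berele uses, and then subtract. You correctly identify the ``principal obstacle'' yourself --- no structural description of $Id^z(M_k(F))$ analogous to the Procesi--Razmyslov theory is available, so the rationality input needed to run Berele's argument is missing. Until that gap is filled, none of the subsequent steps (extracting $g^z\in\tfrac{1}{2}\mathbb{Z}$, controlling the difference when $\alpha^z=\alpha$) can be carried out. One further caution: even if both $c_n$ and $c_n^z$ had asymptotics $\alpha n^g (k^2)^n$ and $\alpha^z n^{g^z}(k^2)^n$ with half-integer exponents, the difference need not be of the same form when the leading terms cancel; you would need finer information (a full asymptotic expansion, or direct rationality for the $\delta_n$ series) rather than just the leading terms.

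In short: the paper proves nothing here, and neither do you --- your write-up is an outline of what a proof might require, with the decisive step left open. That is an honest assessment of the state of the problem, but it should not be presented as a proof.
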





\end{document}